\documentclass[11pt]{amsart}
\usepackage{amsmath,amssymb,mathrsfs, xcolor, geometry, graphicx, subcaption, float,enumerate}
\usepackage{hyperref}

\headheight=6.15pt \textheight=8.3in \textwidth=6.5in
\oddsidemargin=0in \evensidemargin=0in \topmargin=0in

\newtheorem{theorem}{Theorem}
\newtheorem{proposition}[theorem]{Proposition}
\newtheorem{corollary}[theorem]{Corollary}
\newtheorem{lemma}[theorem]{Lemma}
\newtheorem{conjecture}[theorem]{Conjecture}

\theoremstyle{definition}

\newtheorem{remark}[theorem]{Remark}

\makeatletter
\newcommand*\owedge{\mathpalette\@owedge\relax}
\newcommand*\@owedge[1]{%
  \mathbin{%
    \ooalign{%
      $#1\m@th\bigcirc$\cr
      \hidewidth$#1\m@th\wedge$\hidewidth\cr
    }%
  }%
}
\makeatother

\makeatletter 
\def\l@subsection{\@tocline{2}{0pt}{1pc}{5pc}{}} \def\l@subsection{\@tocline{2}{0pt}{2pc}{6pc}{}} \makeatother

\newcommand{\Ric}{\mathrm{Ric}}
\newcommand{\scal}{\mathrm{scal}}
\newcommand{\Hess}{\mathrm{Hess}}

\begin{document}

\title{Linear stability and instability of Kähler Ricci solitons}
\author{Keaton Naff \quad  \quad Tristan Ozuch}

\maketitle

\begin{abstract}
    We show that the recently discovered BCCD shrinking soliton is linearly unstable, by extending the approach of \cite{chi04} and \cite{hm11}, via recent work the \cite{cm21} on gradient shrinking Ricci solitons. On the other hand, we prove that the weighted $L^2$-spectra of the weighted Lichnerowicz Laplacians of steady and expanding Kähler Ricci solitons are nonpositive in real dimension $4$. We additionally determine the linear stability of the orbifold singularities of Kähler solitons: shrinkers are unstable, steadies are neutrally stable and expanders are strictly stable. All of these results follow from new Weitzenböck formulae for the weighted Lichnerowicz Laplacian specialized to Kähler metrics.
\end{abstract}

\section*{Introduction}

There is a growing effort to develop a theory of \emph{generic Ricci flow} in dimension four and higher. The hope is that for an open dense set of initial metrics, the flow should only develop \emph{linearly stable} singularity models and undergoes a manageable list of topological surgeries. The motivation for such a theory is twofold. First, as in dynamical systems, stability yields a robust description of typical evolutions. This was an important aspect of the construction of a 3D \textit{Ricci flow through singularities}, \cite{kl17, bk22}. Second, reducing the myriad of possible singularity models to a short list of \textit{stable} shrinking and steady solitons makes the geometric and topological surgeries performed canonical. Indeed, in principle, unstable singularity models can be avoided by perturbation, and thus only stable singularity models should remain. Subsequently, stable expanding (and steady) solitons should provide the models needed to restart the flow. Remarkably, this heuristic picture has been achieved for embedded, codimension one mean curvature flow in dimension two; see \cite{cm12,bk23, ccms24a, ccs23,ccms24b} and the many references therein. Motivated by Perelman's groundbreaking work in dimension three \cite{p02,p03}, a long-term goal is to obtain a \textit{generic canonical neighborhood theorem}, modeling the geometry of high-curvature regions by \textit{linearly stable} shrinking and steady solitons, and classifying the associated topological changes via stable steady and expanding solitons. 

The motivations above originate, in part, from foundational ideas of Richard S. Hamilton. Indeed, Hamilton introduced Ricci shrinking solitons as models of singularity formation early on in his development of Ricci flow. Shortly after Perelman introduced his remarkable monotonic entropy functional to the Ricci flow, Cao, Hamilton, and Ilmanen \cite{chi04} observed that the functional may be used to investigate the stability and instability of singularity formation. In this direction, classifying stable solitons is an interesting and valuable open problem. \\

\noindent\textbf{Question 1} (Classification of compact stable shrinking solitons)\textbf{.} \textit{Is the round metric on $\mathbb{S}^4$ the only \emph{smooth, simply-connected, compact, strictly linearly stable} shrinking Ricci soliton? Is it the only compact stable \emph{orbifold} shrinking Ricci soliton in the sense of \cite{doorb}?}
\\

\noindent\textbf{Question 2} (Classification of noncompact stable shrinking solitons)\textbf{.} \textit{Are the only smooth, simply-connected, stable shrinking Ricci solitons either the round sphere $\mathbb{S}^4$, the round cylinders $\mathbb{S}^3 \times \mathbb{R}$, $\mathbb{S}^2 \times \mathbb{R}^2$, or the \emph{blowdown} soliton of \cite{fik}? What are the stable \emph{orbifold} examples?} \\

The smooth setting of Question 1 is attributed to Hamilton  -- see \cite{cao06, cao10} as well as  \cite{cz12, cz24}. 

In this paper, we contribute to this program by establishing the linear instability of the shrinking soliton found in \cite{bccd}. This resolves the stability question for the last \textit{currently known} shrinking soliton whose stability was previously undetermined, and the last \textit{K\"ahler} shrinking Ricci soliton, which have now been classified. We also prove the stability of the large class of K\"ahler steady and expanding solitons in the sense of the sign of the weighted-$L^2$-spectrum of their weighted Lichnerowicz Laplacian. We additionally determine the stability of K\"ahler (and non-K\"ahler shrinking) orbifold solitons in the sense of \cite{doorb}.

\subsection*{Instability of the BCCD soliton}

Outside of the generalized cylinders $\mathbb{R}^4, \mathbb{S}^4, \mathbb{S}^3 \times \mathbb{R}, \mathbb{S}^2 \times \mathbb{R}^2$, and the Blowdown soliton of \cite{fik} whose stability is proven in \cite{no25}, all other currently known smooth, simply-connected gradient Ricci shrinking solitons have been shown to be unstable. 
The stability of the recent soliton of \cite{bccd}, however, had remained unknown until now. See Appendix \ref{app:solitons}, Table \ref{tab:page13}.

Let $L^2_f :=  L^2(e^{-f}d\mu_g)$. We say $h\in H^2_f$ if $h, \nabla h, \nabla^2 h\in L^2_f$. 

\begin{theorem}[Instability of the BCCD soliton]\label{thm: instab BCCD}
    The soliton found in \cite{bccd} is linearly unstable in the sense of \cite{chi04}. That is:
    \begin{enumerate}
        \item There exists a $2$-tensor $h\in H^2_f$ such that  $h\perp_{L^2_f} \Ric(g)$, $\operatorname{div}_fh=0$ and
        \[
        \langle L_f h, h\rangle_{L^2_f} >0.
        \]
        \item There exists a non-vanishing ancient solution of the equation $\partial_th = L_f h$ with $\|h\|_{L^2_f}$ uniformly bounded in time.
    \end{enumerate}
\end{theorem}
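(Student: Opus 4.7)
The plan is to adapt the Hall--Murphy strategy of \cite{hm11} (originally developed for compact K\"ahler--Einstein manifolds) to the noncompact BCCD soliton. The essential new input is the specialized Weitzenb\"ock formula promised in the abstract, which re-expresses $L_f$ acting on K\"ahler-compatible tensors in terms of simpler operators on functions or $(1,1)$-forms; combined with the Colding--Minicozzi framework of \cite{cm21} for the weighted $L^2$ theory on shrinking solitons, this reduces the instability question to a tractable scalar eigenvalue problem on a noncompact weighted space.

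For part (1), I would search for a destabilizing direction among K\"ahler-compatible variations. Such a $2$-tensor $h$ can be encoded by a real potential $\phi$ via $\sqrt{-1}\,\partial\bar\partial \phi$, and the new Weitzenb\"ock identity applied to $h_\phi$ should reduce $\langle L_f h_\phi, h_\phi\rangle_{L^2_f}$ to a weighted-$L^2$ quadratic form in $\phi$ governed by a drift Laplacian together with curvature weights. A specific test potential adapted to the symmetry group of BCCD should then be shown to yield a strictly positive value, perhaps by reducing the Rayleigh quotient to an ODE or by a cutoff-and-rescale argument near a region of favorable curvature. The two gauge conditions are handled by corrections that do not affect strict positivity: orthogonality to $\Ric$ by a standard $L^2_f$-projection (harmless since $\Ric$ is an eigentensor of $L_f$ on a shrinker), and $\operatorname{div}_f h = 0$ by subtracting a gauge term $\mathcal{L}_X g$ with $X$ chosen to cancel the weighted divergence of $h_\phi$.

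For part (2), once $h_0 \in H^2_f$ is produced satisfying $\langle L_f h_0, h_0\rangle_{L^2_f} > 0$ together with both gauge conditions, I would invoke the self-adjoint extension of $L_f$ on the appropriate gauge-fixed subspace of $L^2_f$ provided by \cite{cm21}. The strict positivity of the quadratic form forces the spectral measure of $h_0$ to charge the interval $(0,\infty)$; projecting onto the positive spectral part produces a nonzero $h_0^+$ on which $L_f \ge \lambda_0 > 0$ for some $\lambda_0$. Setting $h(t) := e^{t L_f} h_0^+$ and restricting to $t \in (-\infty, 0]$ yields an ancient solution whose $L^2_f$ norm is monotone nondecreasing in $t$ and bounded by $\|h_0^+\|_{L^2_f}$, hence uniformly bounded in time.

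The hard part will be the concrete construction on BCCD: unlike the FIK blowdown soliton handled in \cite{no25}, the BCCD metric is only implicitly described, so producing a K\"ahler potential $\phi$ that simultaneously yields a positive Rayleigh quotient and lies in the correct weighted Sobolev space will require delicate analysis leveraging its symmetry group and asymptotic structure, together with the explicit curvature information furnished by the new Weitzenb\"ock identity. A secondary technical point is justifying the spectral and gauge-fixing arguments in the noncompact weighted setting, where one must argue by density on compactly supported tensors and control boundary terms at infinity using the exponential weight; here \cite{cm21} provides the essential analytic framework.
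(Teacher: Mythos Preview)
Your proposal has a genuine strategic gap. You propose to look for destabilizing directions among variations of the form $h_\phi \sim \sqrt{-1}\,\partial\bar\partial\phi$, i.e.\ coming from a K\"ahler potential. But the paper's Weitzenb\"ock identity on a K\"ahler shrinker reads, for $h_I=\gamma\circ\omega$ with $\gamma$ a real $(1,1)$-form,
\[
L_f h_I = \big[(\Delta_{H,f,0}+\lambda)\gamma\big]\circ\omega,
\]
so that $\langle L_f h_I,h_I\rangle_{L^2_f} = \langle \Delta_{H,f,0}\gamma,\gamma\rangle_{L^2_f} + \lambda\|h_I\|^2_{L^2_f}$, with the Hodge term nonpositive. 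Equality, and hence the maximal Rayleigh quotient $\lambda$, is attained exactly on \emph{weighted-harmonic} $(1,1)$-forms. A potential $\phi$ produces an \emph{exact} form $\gamma=dd^c\phi$, and an exact form in $L^2_f$ that is also weighted-coclosed must vanish (integrate by parts). So your ansatz cannot reach the eigentensor regime; you would be left trying to make $\|\delta_{f,0}\gamma\|^2<\lambda\|\gamma\|^2$ for some exact $\gamma$ orthogonal to $\rho$, which would require quantitative control of the implicitly-defined BCCD metric---precisely the difficulty you yourself flag as ``the hard part''.

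The paper sidesteps this entirely by a topological argument. The BCCD soliton lives on $\mathrm{Bl}_1(\mathbb{CP}^1\times\mathbb{C})$, which carries two independent compactly supported closed $(1,1)$-classes (Thom classes of the exceptional divisor and of a fiber $\mathbb{CP}^1$). Using the discrete $L^2_f$-spectrum of $-\Delta_f$ on $1$-forms from \cite{cm21} (shifted by $\tfrac12$ via the soliton identity, hence invertible), one solves $(\delta_{f,0}d+d\delta_{f,0})\eta=-\delta_{f,0}\tilde\gamma_i$ to find weighted-harmonic representatives $\gamma_1,\gamma_2\in\mathcal{H}^{1,1}_f$. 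Since $\dim\mathcal{H}^{1,1}_f\geq 2$ and the Ricci form spans only one line, there is $\gamma\perp_{L^2_f}\rho$ with $h=\gamma\circ\omega$ satisfying $L_f h=\lambda h$; this gives part~(1) directly, with $\operatorname{div}_f h=0$ automatic from $\delta_{f,0}\gamma=0$ (no gauge-fixing needed). Part~(2) is then immediate: $h$ is an honest eigentensor, so $t\mapsto e^{\lambda t}h$ is the bounded ancient solution---your spectral-projection machinery is unnecessary. The moral: the instability is detected by $b^{1,1}\geq 2$, not by any explicit curvature computation, and your potential-based ansatz lives in the wrong (cohomologically trivial) sector.
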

\begin{remark}
    This answers the question of the stability of all of the \emph{Kähler} shrinking solitons classified in \cite{bccd,lw23,cds24}. 
\end{remark}

\subsection*{Stability of steady and expanding Kähler solitons}

The dynamical stability \cite{der15} and dynamical instability \cite{doorb} of gradient expanding and steady solitons are respectively determined by the absence and presence of positive numbers in the $L^2_f$-spectrum of the Lichnerowicz Laplacian. We show that, in this sense, all expanding and steady Kähler-Ricci solitons are \textit{stable} in real dimension $4$.

\begin{theorem}[Stability of steady and expanding Kähler–Ricci solitons]\label{thm: stab}
Let $(M^4,g,J,f)$ be a complete gradient Kähler–Ricci soliton of real dimension $4$ solving
$$\Ric_g+\Hess f=\frac{\lambda}{2}g,$$
and let $L_f$ denote the weighted Lichnerowicz Laplacian acting on symmetric $2$-tensors, self-adjoint on $L^2_f$. Then, on a steady or expanding soliton, i.e. $\lambda\leqslant 0$, for all $h\in H^2_f$, one has
\[
\langle L_f h,h\rangle_{L^2_f}\leqslant 0.
\]
\end{theorem}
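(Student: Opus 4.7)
The strategy is to exploit the parallel complex structure to reduce the problem to two sectors, and then to produce a Weitzenböck-type identity in each sector that realizes $L_f$ as minus a nonnegative operator plus $\lambda$ times the identity.

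First, I decompose $h = h^+ + h^-$ into its $J$-invariant and $J$-anti-invariant parts, defined by $h^\pm(J\cdot, J\cdot) = \pm h^\pm$. Because $\nabla J = 0$ on a Kähler metric and $\Ric$ is $J$-invariant, the curvature part of $L_f$ commutes with the projection onto each summand, and $L_f$ preserves this splitting. The two sectors are $L^2_f$-orthogonal, so
\[
\langle L_f h, h\rangle_{L^2_f} = \langle L_f h^+, h^+\rangle_{L^2_f} + \langle L_f h^-, h^-\rangle_{L^2_f},
\]
and it suffices to bound each term separately.

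Next, in each sector I would produce a first-order operator $\mathcal{D}_\pm$ — a weighted $\bar\partial$-type operator on the natural complex tensor bundle corresponding to that sector — for which a pointwise Weitzenböck identity of the schematic form
\[
L_f h^\pm \;=\; -\mathcal{D}_\pm^{*_f}\,\mathcal{D}_\pm\, h^\pm \;+\; \lambda\, h^\pm
\]
holds. For $h^+$ the natural object is the real $(1,1)$-form $\alpha(X,Y) = h^+(JX,Y)$, and $\mathcal{D}_+$ is a $\bar\partial_f$-type operator on $(1,1)$-forms. For $h^-$ one uses the identification with sections of an appropriate holomorphic tensor bundle (morally, the tangent space to Hermitian complex deformations of $J$), on which there is again a natural $\bar\partial$-operator. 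In both cases the raw Weitzenböck calculation produces a mixture of curvature contractions together with $\Hess f$ terms coming from the twisting, and the soliton equation $\Ric + \Hess f = \tfrac{\lambda}{2} g$ is exactly what is needed to combine them into a clean multiple of $\lambda\, h^\pm$; Kähler identities are then used to arrange the remaining second-derivative terms into the square $\mathcal{D}_\pm^{*_f} \mathcal{D}_\pm$. Once the pointwise identity is in hand, integrating against $h^\pm$ in the weighted inner product, with the boundary terms controlled using $h \in H^2_f$ and the Gaussian-type weight $e^{-f}$, yields
\[
\langle L_f h^\pm, h^\pm\rangle_{L^2_f} \;=\; -\|\mathcal{D}_\pm h^\pm\|_{L^2_f}^2 + \lambda\,\|h^\pm\|_{L^2_f}^2 \;\leqslant\; 0,
\]
since $\lambda \leqslant 0$. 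Summing the two sectors proves the theorem.

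The main technical obstacle is the derivation of the Weitzenböck formulae themselves with precisely this shape. I expect the restriction to real dimension $4$ to be what forces the curvature contractions to collapse onto $\lambda\, h^\pm$ and nothing else: on a Kähler surface the bundle of $J$-anti-invariant symmetric $2$-tensors has real rank $2$ and sits inside $\Lambda^+$ alongside the Kähler form, while the self-dual Weyl tensor is determined by the scalar curvature. In higher real dimension the analogous computation would presumably produce additional curvature terms with no definite sign. Getting the signs and constants to line up in both sectors, while correctly handling the weighted adjoints $\mathcal{D}_\pm^{*_f}$, is the heart of the argument.
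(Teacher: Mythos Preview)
Your overall architecture matches the paper's: split $h$ into $J$-invariant and $J$-anti-invariant parts, derive a Weitzenb\"ock formula in each sector expressing $L_f$ as a nonpositive Hodge-type Laplacian plus a zeroth-order term, and integrate. The $J$-invariant sector works exactly as you predict: writing $h_I = \gamma \circ \omega$ for a real $(1,1)$-form $\gamma$, one gets $L_f h_I = [(\Delta_{H,f,0} + \lambda)\gamma]\circ\omega$, and integration gives $\langle L_f h_I, h_I\rangle_{L^2_f} \leq \lambda\|h_I\|^2_{L^2_f}$.

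The gap is in the $J$-anti-invariant sector. Your expectation that the curvature terms ``collapse onto $\lambda\, h^-$ and nothing else'' is wrong, and your own parenthetical remark about $W^+$ being determined by $\scal$ is precisely why. On a K\"ahler surface the self-dual Weyl operator has eigenvalues $(\tfrac{\scal}{3}, -\tfrac{\scal}{6}, -\tfrac{\scal}{6})$, with $\omega$ spanning the first eigenspace and $\Re\Lambda^{2,0}$ the second. The anti-invariant symmetric $2$-tensors correspond to $\Lambda^{1,1}_0 \otimes \Re\Lambda^{2,0}$, so the zeroth-order term you obtain is
\[
L_f h_A = \mathrm{tr}(\Delta^L_{H,f,0} S_A) + \Big(\lambda - \frac{\scal}{2}\Big) h_A,
\]
not $\lambda\, h_A$. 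Your proposed inequality $\langle L_f h^-, h^-\rangle_{L^2_f} \leq \lambda\|h^-\|^2_{L^2_f}$ is therefore not what falls out; you get $\langle L_f h_A, h_A\rangle_{L^2_f} \leq (\lambda - \tfrac{\scal}{2})\|h_A\|^2_{L^2_f}$, and you must still argue that $\lambda - \tfrac{\scal}{2} \leq 0$.

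This is where an ingredient you have not mentioned enters. For steady solitons ($\lambda = 0$) one uses that ancient Ricci flows have $\scal \geq 0$. For expanders ($\lambda < 0$) the scalar curvature can be negative, and one needs the lower bound $\scal \geq 2\lambda$ on complete gradient solitons (Pigola--Rimoldi--Setti). Without this step the argument does not close in the anti-invariant sector.
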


\begin{remark}
    This shows in particular that the Kähler expander used to restart the flow from the FIK blowdown singularity in \cite{fik} is $L^2_f$-stable. This supports the idea that the associated surgery proposed by \cite{gs} is stable. Since strict stability implies local uniqueness, this is a first step toward establishing uniqueness of these expanding solitons.
\end{remark}

\subsection*{Stability and instability of orbifold singularities of Kähler solitons}

The recent compactness theory of \cite{bam1,bam2,bam3} shows that finite-time singularities of a 4-dimensional Ricci flow  can, in general, be traced back to an \textit{orbifold} Ricci shrinking soliton, rather than a smooth Ricci shrinking soliton. Singular tangent solitons have been observed in \cite{doorb}, which introduced a notion of stability of orbifold singularities with respect to their expected origin: bubbling off of known Ricci-flat ALE metrics. This notion of stability depends on the spectrum of a weighted selfdual curvature denoted $\overline{\mathbf{R}}{}^+$ at the orbifold points: positive eigenvalues indicate instability.

\begin{theorem}\label{thm: stab orb}
    Let $(M^4,g,f)$ be a Kähler Ricci soliton \textit{orbifold} singular at $p_o$ with singularity in $SU(2)$ and solving 
$$\Ric_g+\Hess f=\frac{\lambda}{2}g.$$
Then, the eigenvalues of the weighted selfdual curvature of \cite{doorb} are (up to a positive multiplicative constant depending upon conventions):  
$$ \operatorname{spec}\overline{\mathbf{R}}{}^+ _{p_0}= \left(\lambda,\lambda-\frac{\operatorname{scal}_{p_0}}{2},\lambda-\frac{\operatorname{scal}_{p_0}}{2}\right), $$
with the Kähler form being the first eigendirection. In particular, it is \emph{orbifold point stable} in the sense of \cite{doorb} when $\lambda<0$, \textit{semistable} when $\lambda=0$, and \emph{unstable} when $\lambda>0$.  

\end{theorem}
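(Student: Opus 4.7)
The plan is to reduce Theorem \ref{thm: stab orb} to an explicit linear algebra exercise at $p_0$, using the Kähler structure together with the soliton equation to diagonalize $\overline{\mathbf{R}}^+_{p_0}$. Because the orbifold group $\Gamma\subset SU(2)$ acts trivially on the $\Lambda^+$-factor of $\Lambda^2\mathbb{R}^4$, the space $\Lambda^+_{p_0}$ is an honest $3$-dimensional vector space, and the Kähler form $\omega$ descends to $p_0$ and determines the canonical orthogonal splitting
$$\Lambda^+_{p_0} \;=\; \mathbb{R}\,\omega \,\oplus\, \Lambda^+_0,$$
where $\Lambda^+_0\cong(\Lambda^{2,0}\oplus\Lambda^{0,2})_{\mathbb{R}}$ is the $J$-anti-invariant part. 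This already identifies $\omega$ as the distinguished eigendirection singled out in the statement.

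Next, I would unpack the definition of $\overline{\mathbf{R}}^+_{p_0}$ from \cite{doorb}: it is a symmetric endomorphism of $\Lambda^+_{p_0}$ built from the curvature operator $\mathbf{R}^+$ and the Hessian of $f$, arising from the linearization of the $f$-weighted energy around a Ricci-flat ALE bubble. Using the soliton equation $\Ric + \Hess f = \frac{\lambda}{2}g$ to trade $\Hess f$ for $\frac{\lambda}{2}g - \Ric$, together with the fact that $\Ric$ and $\Hess f$ both commute with $J$ in the Kähler setting (so that their induced endomorphisms of $\Lambda^+$ are diagonal in $\mathbb{R}\omega\oplus\Lambda^+_0$), I expect the $\Hess f$-contribution to collapse into a multiple of the identity on $\Lambda^+_{p_0}$ once combined with the identity piece of $\mathbf{R}^+$. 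Matching normalizations should yield, up to the positive multiplicative constant allowed in the statement,
$$\overline{\mathbf{R}}^+_{p_0} \;=\; 2\mathbf{R}^+_{p_0} + \left(\lambda - \frac{\scal_{p_0}}{2}\right)\operatorname{Id}_{\Lambda^+_{p_0}}.$$

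The eigenvalues are then read off from the standard Kähler four-manifold fact that the selfdual Weyl tensor acts as $\operatorname{diag}(\scal/6,-\scal/12,-\scal/12)$ in the basis $\{\omega\}\cup\Lambda^+_0$, so $\mathbf{R}^+ = W^+ + \frac{\scal}{12}\operatorname{Id}$ acts as $\operatorname{diag}(\scal/4,0,0)$. Substituting gives eigenvalue $2\cdot\frac{\scal_{p_0}}{4} + \lambda - \frac{\scal_{p_0}}{2} = \lambda$ on $\omega$ and eigenvalue $\lambda - \frac{\scal_{p_0}}{2}$ on $\Lambda^+_0$ (with multiplicity two), as claimed. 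The stability dichotomy then follows by a sign analysis: for $\lambda>0$ the eigenvalue $\lambda$ is positive, giving orbifold instability; for $\lambda=0$ the eigenvalues are $0,-\scal_{p_0}/2,-\scal_{p_0}/2\leq 0$ (using $\scal\geq 0$ on steady Kähler solitons) with equality attained, giving semistability; and for $\lambda<0$ the scalar-curvature lower bounds for Kähler expanders force $\lambda-\scal_{p_0}/2<0$ as well, giving strict stability.

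The main obstacle is the second step: correctly extracting the definition of $\overline{\mathbf{R}}^+$ from \cite{doorb} and verifying that, in the Kähler case, the Hessian contribution is absorbed cleanly into a scalar multiple of the identity via the soliton equation. This is where the \emph{new Weitzenböck formulae for the weighted Lichnerowicz Laplacian specialized to Kähler metrics} advertised in the abstract will presumably do the work, by expressing $\overline{\mathbf{R}}^+$ on Kähler data in a form where the $f$-dependence has already been collapsed. Once this is in place, everything reduces to the standard Kähler linear algebra sketched above.
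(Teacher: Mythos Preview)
Your approach is essentially the same as the paper's: identify $\overline{\mathbf{R}}{}^+$ as $W^+$ plus a scalar multiple of the identity determined by the soliton equation, then diagonalize using the standard Kähler splitting $\Lambda^+ = \mathbb{R}\omega \oplus \Re\Lambda^{2,0}$, and finally invoke scalar curvature bounds (the paper cites \cite[Theorem~3]{prs11}) for the sign analysis.

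One simplification worth noting: the ``main obstacle'' you flag is not an obstacle at all. The weighted selfdual curvature used in \cite{doorb} and in the paper's Weitzenb\"ock formula (at parameter $a=0$) only involves the \emph{trace} of $\Hess f$:
\[
\overline{\mathbf{R}}{}^+ \;=\; W^+ + \frac{\scal}{6}\,\mathrm{Id}_{\Lambda^+_g} + \frac{\Delta f}{2}\,\mathrm{Id}_{\Lambda^+_g}.
\]
So the ``Hessian contribution'' is a scalar multiple of the identity from the start, and the traced soliton equation $\scal + \Delta f = 2\lambda$ immediately gives $\overline{\mathbf{R}}{}^+ = W^+ + (\lambda - \tfrac{\scal}{3})\,\mathrm{Id}_{\Lambda^+_g}$, with no K\"ahler input required at this step. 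Your detour through $J$-invariance of $\Ric$ and $\Hess f$ to collapse a putative full-Hessian term is unnecessary. After that, your K\"ahler diagonalization of $W^+$ and the ensuing eigenvalue computation are exactly what the paper does (this is its equation \eqref{eq:action curvature Kähler}).
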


\begin{remark}
    In particular, we detect that steady solitons with $\lambda=0$ have exactly \emph{one} neutral direction in the direction of their Kähler form at their orbifold singularities. This is consistent with the existence of Kähler steady soliton \textit{desingularizations} in \cite{bm24}. 
\end{remark}

\begin{conjecture}\label{conj: ancient KRF orb}
    Let $(M^4,g,f)$ be a Kähler Ricci shrinking soliton \textit{orbifold} singular at $p_o$ with singularity group in $U(2)$. Then, there exists a smooth ancient \emph{Kähler}-Ricci flow whose tangent soliton at $-\infty$ is $(M^4,g,f)$.
\end{conjecture}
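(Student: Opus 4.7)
The plan is to construct the ancient Kähler-Ricci flow by gluing a Kähler ALE resolution of the orbifold singularity into $(M^4,g,f)$ and then perturbing, in the spirit of the desingularization framework of \cite{doorb} adapted to the Kähler category, and parallel to the Kähler steady desingularizations of \cite{bm24}. The starting input is Theorem \ref{thm: stab orb}, together with its extension from $SU(2)$ to $U(2)$ singularities: for shrinkers ($\lambda > 0$) the Kähler form at $p_o$ is a positive eigendirection of $\overline{\mathbf{R}}{}^+$, which furnishes the unstable Kähler direction along which to build the ancient flow. Extending the theorem to $U(2)$ singularities should be a direct Weitzenböck computation along the same lines, since the only new contribution comes from the non-trivial first Chern class of the quotient.

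For the building block, when $\Gamma \subset SU(2)$ one uses Kronheimer's hyperkähler ALE metrics, which provide Kähler Ricci-flat resolutions of $\mathbb{C}^2/\Gamma$ asymptotic to the flat cone. For $\Gamma \subset U(2)\setminus SU(2)$ there is no Ricci-flat Kähler ALE model, so one instead uses an asymptotically conical expanding Kähler-Ricci soliton resolving $\mathbb{C}^2/\Gamma$ with the correct first Chern class. Rescaling the chosen model by a scale $s(\tau)\to 0$ as $\tau\to -\infty$, at a rate dictated by the unstable eigenvalue $\lambda$ of Theorem \ref{thm: stab orb}, and gluing it into a shrinking ball around $p_o$ produces an approximate ancient Kähler-Ricci flow whose parabolic blowdown at $-\infty$ recovers $(M^4,g,f)$.

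The final step is a nonlinear perturbation argument parametrized by Kähler potentials with respect to the complex structure on $M$ inherited from the orbifold and extended by the glued resolution. Restricting to the Kähler class eliminates most of the diffeomorphism gauge freedom and reduces the linearization to $L_f$ acting on $(1,1)$-forms; for this operator the Weitzenböck identities underlying Theorem \ref{thm: stab orb} should yield the Fredholm and coercivity estimates needed to close a contraction mapping in weighted $H^2_f$ spaces along the approximate flow, with convergence controlled by the gap between the unstable Kähler mode and the remainder of the spectrum. The hardest step is expected to be the case $\Gamma \not\subset SU(2)$: the absence of a Ricci-flat Kähler ALE model forces the use of an expanding soliton whose own asymptotic geometry scales nontrivially with time, so that matching with the orbifold soliton at the gluing scale, and controlling the resulting error terms uniformly as $\tau \to -\infty$, is substantially more delicate than in the Calabi–Yau case.
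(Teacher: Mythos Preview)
The statement you are addressing is labeled in the paper as a \emph{Conjecture}, not a theorem; the paper does not prove it. The only content in the paper related to it is the remark that the Kähler form provides the unstable eigendirection of $\overline{\mathbf{R}}{}^+$ at the orbifold point, which is offered as \emph{motivation} for the conjecture. There is therefore no proof in the paper to compare your proposal against.

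What you have written is not a proof but a research outline. The overall strategy---glue in a Kähler ALE (or asymptotically conical) resolution, scale it according to the unstable eigenvalue, and perturb to an exact ancient flow---is a reasonable heuristic and is indeed the picture suggested by \cite{doorb} and \cite{bm24}. But each step hides substantial unproven content: the extension of Theorem~\ref{thm: stab orb} to $U(2)$ is asserted, not carried out; the existence of suitable Kähler expanding ALE models for general $\Gamma\subset U(2)\setminus SU(2)$ with the correct asymptotic cone is not established; the Fredholm theory for $L_f$ on the glued, time-dependent approximate flow in weighted spaces is only named, not performed; and the contraction-mapping closure ``should yield'' the needed estimates, which is precisely the hard part. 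In short, your text is a plausible plan of attack consistent with the paper's motivation, but it does not constitute a proof, and the paper itself makes no claim to have one.
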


\subsection*{Acknowledgements}

The authors thank Alix Deruelle and Ronan Conlon for pointing out the geometric description of \cite{ccd24} of the soliton of \cite{bccd}, which let us compute its central density in Table \ref{tab:page13}. During this project, TO was partially supported by the National Science Foundation under grant DMS-2405328 and KN was partially supported by the National Science Foundation under grant DMS-2103265. The authors thank Louis Yudowitz for pointing out to us that the asymptotic cone of the blowdown soliton had positive curvature operator, and that the results of \cite{gs} could be applied to manifolds with associated conical singularities.

The first author is deeply grateful to have learned Ricci flow in part from Richard S. Hamilton as a graduate student at Columbia University. Richard's vision and generosity have shaped the direction of this work. 

\section{The Weitzenb\"ock formula for K\"ahler solitons}

In this section, we state our key Weitzenb\"ock formula. A version of this formula was used by Hall and Murphy \cite{hm11} to show the instability of nontrivial compact K\"ahler Ricci shrinking solitons. We state it here for general gradient Ricci solitons and will specialize it to the K\"ahler setting in the next section. 

In Appendix \ref{app:weitzenbock}, we give a detailed summary of our conventions and the derivation that leads to the formula below. Here, we summarize the main ingredients.
\begin{itemize}
    \item Sections $T$ and $U$ of $\Lambda^2 \otimes \Lambda^2$ (including curvature operators) compose via $(T \circ U)_{ijkl} = T_{ijpq} U_{pqkl}$ and act on $2$-forms $\phi$ by $T(\phi)_{ij} = T_{ijkl}\phi_{kl}$. The trace-map $\mathrm{tr} : \Lambda^2 \otimes \Lambda^2 \to \mathrm{Sym}^2(T^\ast M)$, yielding symmetric $2$-tensors, is defined by $\mathrm{tr}(T)_{ik} = \frac{1}{2}(T_{ipkp} + T_{kpip})$.  With this convention, $\mathrm{Id}_{\Lambda^2} = \frac{1}{4} g \owedge g$, where $\owedge$ denotes the Kulkarni-Nomizu product \eqref{eq:kn-formula}.
    \item The Hodge star decomposes $\Lambda^2 = \Lambda^-_g \oplus \Lambda^+_g $ into anti-self-dual and self-dual 2-forms. This also leads to a decomposition of curvature operators. In particular, the Weyl tensor has a decomposition $W = W^+ + W^-$ with $W^\pm \in \Lambda^g_{\pm} \otimes \Lambda^g_{\pm}$. 
    \item The tensor product $\Lambda_g^- \otimes \Lambda_g^+$ is in bijection with the bundle of traceless symmetric $2$-tensors $\mathrm{Sym}^2_0(T^\ast M)$ via the trace map. 
    \item On sections of $\Lambda^k \otimes \Lambda^l$, there is a natural left-acting  differential $d^L$ and corresponding co-differential $\delta^L$. These yield a left-acting Hodge Laplacian $\Delta^L_H = -(d^L \delta^L + \delta^L d^L)$, related to the rough Laplacian $\Delta$ by a Weitzenb\"ock formula.\footnote{Right-acting operators also exist, but the left-acting ones lead to a formula involving $W^+$, the component of the Weyl tensor determined by the K\"ahler condition.} 
    \item Given a function $f$, one obtains a weighted co-differential by
    \[
    \delta^L_{f, 0} := e^f \delta^L e^{-f},
    \]
    leaving $d^{L}$ unchanged. Then $\delta^L_{f, 0}$ is formally $L^2_f$-adjoint to $d^L$. This gives rise to an $L^2_f$-self-adjoint weighted Hodge Laplacian 
    \[
    \Delta^{L}_{H, f, 0} = - (d^L \delta^L_{f, 0} + \delta^L_{f, 0} d^L),
    \]
    related to $\Delta_f$ by a weighted Weitzenb\"ock formula. 
    \item Importantly, one has 
    \[
    \langle \mathrm{tr}(\Delta^L_{H, f, 0} S), \mathrm{tr}(S) \rangle_{L^2_f} = \frac{1}{4} \langle \Delta^L_{H, f, 0} S, S \rangle_{L^2_f} \leq 0.
    \]
\end{itemize}

\begin{proposition}\label{prop:main-weitzenbock}
    Let $(M, g, f, \lambda)$ be a 4-dimensional gradient Ricci soliton satisfying 
    \[
    \Ric_g + \Hess f = \frac{\lambda}{2} g.
    \]
    If $S$ is a section of $\Lambda^-_g \otimes \Lambda^+_g$ and $h = u\, g + \mathrm{tr}(S)$, then
    \begin{align}\label{eq:main-weitzenbock-formula}
    L_f h &= (\Delta_f u) g  + 2u \, \Ric + \mathrm{tr}\Big( \Delta^L_{H, f,0} S + S \circ  \big(W^++ \big(\lambda - \frac{\scal}{3} \big)\mathrm{Id}_{\Lambda^+_g}\big)\Big).
    \end{align}
\end{proposition}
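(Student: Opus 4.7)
The plan is to split $h = u g + \mathrm{tr}(S)$ and treat each summand separately, exploiting parallelism of the algebraic trace map in the second piece. For the conformal part, one uses that on a gradient Ricci soliton the weighted Lichnerowicz Laplacian reduces to $L_f = \Delta_f + 2\,\mathrm{Rm}$ on symmetric $2$-tensors, because the soliton equation absorbs the $\Ric \cdot h + h \cdot \Ric$ terms of the usual Lichnerowicz into the drift. Since $g$ is parallel, $\Delta_f(u g) = (\Delta_f u)\, g$, and the curvature endomorphism satisfies $\mathrm{Rm}(g)_{ij} = R_{ikjl}g_{kl} = R_{ij}$, producing $2u\,\Ric$ and matching the first two terms of \eqref{eq:main-weitzenbock-formula}.

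For the traceless piece $\mathrm{tr}(S)$, parallelism of $\mathrm{tr}: \Lambda^2 \otimes \Lambda^2 \to \mathrm{Sym}^2(T^\ast M)$ gives $\Delta_f \circ \mathrm{tr} = \mathrm{tr} \circ \Delta_f$, so $L_f(\mathrm{tr}(S)) = \mathrm{tr}(\Delta_f S) + 2\,\mathrm{Rm}(\mathrm{tr}(S))$. The weighted Weitzenböck of Appendix \ref{app:weitzenbock} for the left-acting Hodge Laplacian has the schematic form $\Delta^L_{H,f,0} S = \Delta_f S + \mathcal{K}^L(S)$, where $\mathcal{K}^L$ is a curvature endomorphism acting only on the left $\Lambda^2$-factor. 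Substituting yields
\[
L_f(\mathrm{tr}(S)) = \mathrm{tr}\bigl(\Delta^L_{H,f,0} S\bigr) + \bigl(2\,\mathrm{Rm}(\mathrm{tr}(S)) - \mathrm{tr}(\mathcal{K}^L(S))\bigr),
\]
and it remains to identify the parenthesized remainder with $\mathrm{tr}\bigl(S \circ (W^+ + (\lambda - \tfrac{\scal}{3})\,\mathrm{Id}_{\Lambda^+_g})\bigr)$.

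This identification is a purely algebraic computation using the Singer–Thorpe decomposition of the curvature operator on $\Lambda^2 = \Lambda^-_g \oplus \Lambda^+_g$ in dimension four, whose diagonal blocks are $W^\pm + \frac{\scal}{12}\,\mathrm{Id}_{\Lambda^\pm_g}$ and whose off-diagonal blocks encode the traceless Ricci. Since $S \in \Lambda^-_g \otimes \Lambda^+_g$, the left action $\mathcal{K}^L(S)$ extracts only the $\Lambda^-$-block contribution (plus $\Ric_0$-couplings into $\Lambda^+_g$), while $2\,\mathrm{Rm}(\mathrm{tr}(S))$, pulled back through the trace-isomorphism, contains both the $\Lambda^-$-block on the left and the $\Lambda^+$-block on the right, together with a coupling linear in $\Ric$. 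The $\Lambda^-$ pieces cancel against $\mathcal{K}^L$; the $\Ric$ coupling is eliminated using the soliton equation $\Ric = \frac{\lambda}{2}g - \Hess f$, the $\Hess f$ piece having already been absorbed into $\Delta^L_{H,f,0}$. What remains combines the $\frac{\lambda}{2}g$ coupling with the residual $\frac{\scal}{12}\,\mathrm{Id}_{\Lambda^+_g}$ contributions from both blocks, and after tracking the factor of $2$ in $L_f$ and the factor $\frac{1}{4}$ in $\mathrm{Id}_{\Lambda^2} = \frac{1}{4} g \owedge g$, produces exactly the coefficient $\lambda - \frac{\scal}{3}$.

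The main obstacle is precisely this bookkeeping: coordinating the Singer–Thorpe decomposition on the two factors of $\Lambda^2 \otimes \Lambda^2$ against the trace map $\mathrm{tr}: \Lambda^-_g \otimes \Lambda^+_g \to \mathrm{Sym}^2_0(T^\ast M)$, with careful attention to sign conventions for the (weighted) left-acting Hodge Laplacian and the various normalization constants. Obtaining the clean scalar coefficient $\lambda - \frac{\scal}{3}$ rather than a messier combination in $\lambda$ and $\scal$ is the delicate point, and is precisely where the soliton equation and the self-duality decomposition conspire to produce a formula whose trace against $\mathrm{tr}(S)$ has the definite sign properties exploited in the later applications.
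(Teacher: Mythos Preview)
Your overall strategy matches the paper's: split $h$ into conformal and traceless parts, commute $\Delta_f$ through $\mathrm{tr}$, and replace $\Delta_f S$ by $\Delta^L_{H,f,0}S$ via a Weitzenb\"ock identity on bi-forms. However, two intermediate assertions are wrong as stated and would derail the bookkeeping you flag as delicate. First, the remainder $\mathcal{K}^L$ in $\Delta^L_{H,f,0}S=\Delta_f S+\mathcal{K}^L(S)$ is \emph{not} a curvature endomorphism acting only on the left factor. Combining Proposition~\ref{prop:weitzenbock-bi-forms} with Lemma~\ref{lem:weighted-with-a} at $a=0$ gives
\[
\mathcal{K}^L(S)=\big(W-\tfrac{\scal}{3}\mathrm{Id}_{\Lambda^2}\big)\circ S \;+\; R\#S \;-\; \tfrac{1}{2}(\Hess f\owedge g)\circ S.
\]
The $R\#S$ term couples both factors (for $S\in\Lambda^-_g\otimes\Lambda^+_g$ only the $\tfrac12(\mathring{\Ric}\owedge g)\#S$ block survives, since $W\#S=\mathrm{Id}_{\Lambda^2}\#S=0$), and there is an explicit $\Hess f$ term. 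Your later claim that the $\Hess f$ piece ``has already been absorbed'' contradicts your description of $\mathcal{K}^L$ as pure curvature; in fact it must be extracted and traced.

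Second, the coefficient $\lambda-\tfrac{\scal}{3}$ does not arise from combining a $\tfrac{\lambda}{2}g$ coupling with two $\tfrac{\scal}{12}$ contributions. The unweighted computation (Proposition~\ref{prop:unweighted-2-tensors}) leaves $S\circ(W^++\tfrac{\scal}{6}\mathrm{Id}_{\Lambda^+_g})$ plus a residual $\tfrac12 g(\mathring{\Ric},\mathring h)\,g$. Passing to the weighted operator introduces $\tfrac12(\Hess f\owedge g)\circ S$, whose trace in dimension four is $\tfrac12(\Delta f)\,\mathring h+\tfrac12 g(\mathring{\Hess}f,\mathring h)\,g$. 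The second summand combines with the $\mathring{\Ric}$ residual to produce $\tfrac12 g(\mathring{\Ric}_f,\mathring h)\,g=0$ by the soliton equation; the first shifts $\tfrac{\scal}{6}$ to $\tfrac{\scal+3\Delta f}{6}$, and only then does the traced soliton identity $\scal+\Delta f=2\lambda$ convert this to $\lambda-\tfrac{\scal}{3}$. Your sketch does not isolate this trace identity or the $\Delta f$ shift, and without them the clean coefficient cannot be obtained.
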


Nonpositivity of the zeroth-order term, $W^ + + (\lambda - \frac{\scal}{3} )\mathrm{Id}_{\Lambda^+_g}$, yields the stability of a soliton under the given deformation. Alternatively, the vanishing of $\Delta^L_{H, f, 0} S$ with nonnegativity of zeroth-order term yields instability.

\section{Stability of gradient Kähler steady and expanding solitons}

We now specialize and apply the Weitzenböck formula of Proposition \ref{prop:main-weitzenbock} to Kähler metrics and prove that the spectrum of the weighted Lichnerowicz Laplacian of steady and expanding Kähler Ricci solitons is nonpositive. 

\subsection{Symmetric $2$-tensors and $2$-forms on Kähler surfaces and selfduality}

Let $(M^4,g,J,\omega)$ be a Kähler surface. Then the bundle of complex $(1,1)$-forms is
$$\Lambda^{(1,1)}\subset \Lambda^2\otimes\mathbb C,\qquad
\Lambda^2\otimes\mathbb C=\Lambda^{2,0}\oplus\Lambda^{(1,1)}\oplus\Lambda^{0,2}.$$
We write $\Lambda^{1,1}:=\{\alpha\in\Lambda^2:\alpha(J\cdot,J\cdot)=\alpha\}$ for the \emph{real} $(1,1)$-forms. 
Then $\omega\in\Lambda^{1,1}$ and there is an orthogonal splitting
$$\Lambda^{1,1}=\mathbb R\,\omega\ \oplus\ \Lambda^{1,1}_0,
\qquad
\Lambda^{1,1}_0:=\{\beta\in\Lambda^{1,1}:\beta\wedge\omega=0\}.$$
For a Kähler surface, with the complex orientation, there is a further splitting 
$$\Lambda^2=\Lambda^+_g\oplus\Lambda^-_g,$$
$$\Lambda^+_g=\langle\omega\rangle\oplus\Re\Lambda^{2,0},\qquad
\Lambda^-_g=\Lambda^{1,1}_0.$$
We will denote $\pi_{(1,1)}$ the projection onto the $(1,1)$-part, whose complement is $\Re\Lambda^{2,0}$, so that
\begin{equation}
    \pi_{(1,1)}\alpha=\tfrac12\left(\alpha+\alpha(J\cdot,J\cdot)\right).
\end{equation}

For $h$ a symmetric $2$-tensor, set $(J\cdot h)(X,Y):=h(JX,JY)$, and denote
\[
h_0:=h-\tfrac14(\operatorname{tr}_g h)\,g,\qquad
h_{0,I}:=\tfrac12\big(h_0 + J \cdot h_0\big) =: h_I-\tfrac14(\operatorname{tr}_g h)\,g,\qquad h_A=h_{0,A}:=\tfrac12\big(h_0 - J \cdot h_0\big).
\]
Then, define subbundles of $\mathrm{Sym}^2 = \mathrm{Sym}^2 (T^\ast M)$ (of ranks $4, 3$, and $6$)
\[
\operatorname{Sym}^2_{I}=\{h:\,J\cdot h=+ h\}, \qquad \operatorname{Sym}^2_{0,I}:=\operatorname{Sym}^2_{0}\cap \operatorname{Sym}^2_{I}, \qquad \operatorname{Sym}^2_{A} = \operatorname{Sym}^2_{0,A} =\{h:\,J\cdot h=- h\},
\]
so that
\[
\operatorname{Sym}^2=\mathbb{R}g\oplus\operatorname{Sym}^2_0,\qquad
\operatorname{Sym}^2_0=\operatorname{Sym}^2_{0,I}\oplus\operatorname{Sym}^2_{0,A}.
\]

On the other hand, on any oriented $4$-manifold, as used to prove our Weitzenböck formula, the trace operation $\operatorname{tr}:\Lambda^2\otimes\Lambda^2\to \operatorname{Sym}^2$ provides an isomorphism
$$\operatorname{Sym}^2_0\cong\Lambda^-_g\otimes\Lambda^+_g.$$
In terms of symmetric $2$-tensors, this gives
\[
\operatorname{Sym}^2_{0,I}\cong\Lambda^{1,1}_0\otimes\langle\omega\rangle ,\qquad
\operatorname{Sym}^2_{0,A}\cong\Lambda^{1,1}_0\otimes\Re\Lambda^{2,0} .
\]
Since the metric $g$ is proportional to $\operatorname{tr}(\omega\otimes\omega)$ (with a constant depending upon conventions), using the operator $\operatorname{tr}$ we obtain the isomorphisms:
\[
\operatorname{Sym}^2_I\cong\Lambda^{1,1}\otimes\langle\omega\rangle=\big(\langle \omega\rangle\oplus \Lambda^{1,1}_0\big)\otimes \langle \omega\rangle =\left(\langle \omega\rangle\oplus \Lambda^-_g\right)\otimes \langle \omega\rangle, \text{ and}
\]
\[
\operatorname{Sym}^2_A=\operatorname{Sym}^2_{0,A}\cong\Lambda^{1,1}_0\otimes\Re\Lambda^{2,0}.
\]

\subsection{Action of the curvature in the K\"ahler setting}

We first recall that the curvature operator, seen as an element of $\Lambda^2\otimes\Lambda^2$ of a Kähler surface is $J$-invariant, hence it lies on
\begin{equation}
    (\Lambda^2\otimes\Lambda^2)_I= \big(\langle\omega\rangle\otimes\langle\omega\rangle\big)\oplus \big(\langle\omega\rangle\otimes \Lambda^-_g\big)\oplus \big(\Lambda^-_g\otimes \langle\omega\rangle\big) \oplus\big(\Lambda^-_g\otimes \Lambda^-_g\big),
\end{equation}
In particular, the Ricci curvature is of the form $ \rho\otimes\omega  + \omega \otimes \rho \in (\Lambda^{1,1}\otimes \langle\omega\rangle )\oplus ( \langle\omega\rangle\otimes \Lambda^{1,1})$ and the selfdual curvature $\frac{\operatorname{scal}}{6}\operatorname{Id}_{\Lambda^+_g}+W^+\in \langle\omega\rangle\otimes \langle\omega\rangle$. This crucially determines the action of $\frac{\operatorname{scal}}{6}\operatorname{Id}_{\Lambda^+_g}+W^+$ on $\Lambda^+_g=\langle\omega\rangle\oplus\Re\Lambda^{2,0}$:
\begin{equation}
    \Big(W^+ +\frac{\operatorname{scal}}{6}\operatorname{Id}_{\Lambda^+_g}\Big)(\omega) = \frac{\operatorname{scal}}{2} \omega, \qquad  \Big(W^+ + \frac{\operatorname{scal}}{6}\operatorname{Id}_{\Lambda^+_g}\Big)\Big|_{\Re\Lambda^{2,0}}=0.
\end{equation}
Consequently, we have
\begin{equation}\label{eq:action curvature Kähler}
    \Big(W^+ +\Big(\lambda-\frac{\operatorname{scal}}{3}\Big)\operatorname{Id}_{\Lambda^+_g}\Big)(\omega) = \lambda \omega, \qquad\Big(W^+ +\Big(\lambda-\frac{\operatorname{scal}}{3}\Big)\operatorname{Id}_{\Lambda^+_g}\Big)\Big|_{\Re\Lambda^{2,0}}=\left(\lambda-\frac{\operatorname{scal}}{2}\right)\operatorname{Id}_{\Re\Lambda^{2,0}}.
\end{equation}

\subsection{The weighted Hodge Laplacian and divergence in the  Kähler setting}

Having established that $\Lambda^-_g \otimes \Lambda^+_g = \Lambda^{1,1}_0 \otimes (\langle \omega \rangle \oplus \mathfrak{R} \Lambda^{2, 0})$ and examined the action of $W^++ (\lambda - \frac{\scal}{3}) \mathrm{Id}_{\Lambda^+_g}$ on $\Lambda^+_g$, it remains to further examine $\Delta^L_{H, f, 0} S$. We begin with a general identity: 
\[
\Delta^L_{H, f, 0} (\gamma \otimes \phi) = (\Delta_{H, f,0} \gamma) \otimes \phi + 2 \nabla_k \gamma \otimes \nabla_k \phi + \gamma \otimes \Delta_f \phi + R \# (\gamma \otimes \phi).
\]
This follows from the definitions and formulas in Sections~\ref{sec:classical-weitzenbock}-\ref{sec:weighted-operators}. If $\phi = \omega$, the middle two terms vanish since the Kähler form is parallel. If, additionally,  $\gamma \in \Lambda^-_g$, then the last term also vanishes. Indeed, since the operator $\#$ preserves the duality decomposition of $\Lambda^2 \otimes \Lambda^2$, one has 
\[
R \# (\gamma \otimes \omega) = \frac{1}{2}(\mathring{\Ric} \owedge g) \# (\gamma \otimes \omega)
\]
because $\mathring{\Ric} \owedge g$ is the only component of the curvature in $\Lambda^-_g \otimes \Lambda^+_g$. In the K\"ahler setting $ \mathring{\Ric}\owedge g = \mathring{\rho} \otimes \omega + \omega \otimes \mathring{\rho}$ where $\mathring{\rho} = \rho - \scal \, \omega \in \Lambda^{1,1}$ and $\rho$ is the Ricci form. It follows from the defining identity \eqref{eq:sharp-lie} that $\mathring{\rho} \otimes \omega  \# \gamma \otimes \omega  = \omega \otimes \mathring{\rho}  \# \gamma \otimes \omega = 0$.

Hence, for $S_I = \gamma \otimes \omega \in\Lambda^{1,1}\otimes \langle\omega\rangle$, one has $\Delta^L_{H,f 0} S =( \Delta^L_{H, f, 0} \gamma ) \otimes \omega$, and therefore  
\begin{equation}\label{eq:Laplacian J inv Kähler}
    \operatorname{tr}\left(\Delta^L_{H, f, 0} S_I\right) = \mathrm{tr}\big(( \Delta^L_{H, f, 0} \gamma ) \otimes \omega\big) = (\Delta_{H, f, 0}\gamma)\circ  \omega.
\end{equation}
\begin{remark}
    This is the same formula as in \cite[Lemma 4.3]{hm11}. 
\end{remark}
We may now define the space of \textit{weighted} harmonic $2$-forms, denoted 
\[
\mathcal{H}_f^{1,1}:= \left\{ \gamma\in \Lambda^{1,1}\cap H^1_f :  d\gamma =  \delta_{f,0}\gamma=0 \right\}.
\]

When analyzing the linear stability or instability of gradient Ricci solitons in the direction of a deformation $h$, one may assume without loss of generality that $\mathrm{div}_f(h) = 0$ (see, for instance, the discussion in \cite[Section 2.2]{no25}). 
In the K\"ahler setting, we have the following nice formula for the weighted divergence. If $h_I = \operatorname{tr}(S_I)$ with $S_I= \gamma\otimes \omega $, then
\begin{equation}\label{eq: weighted div free}
    \operatorname{div}_fh_I=0 \iff \delta_{f,0} \gamma=0.
\end{equation}
\begin{remark}
    The Ricci form $\rho\in \Lambda^{1,1}$, always satisfies $d\rho=0$, $\delta_{f,0}\rho=0$, so it is always in the kernel of $\Delta_{H,f,0}$ and $\operatorname{div}_f$-free. 
\end{remark}

\subsection{Weitzenböck formulae on Kähler Ricci solitons}

Let $(M,g,J,f)$ be a Kähler Ricci soliton satisfying 
\[
\Ric+\Hess f=\frac{\lambda}{2}g.
\]
Let $\omega(\cdot, \cdot) = g(J\cdot, \cdot)$ denote the K\"ahler form. Let $h_I$ and $h_A$ denote the $J$-invariant and $J$-anti-invariant parts of a symmetric $2$-tensor $h$, and define $S_I\in \Lambda^{1,1}\otimes \langle\omega\rangle$ and $S_A\in \Lambda^{1,1}_0\otimes \Re\Lambda^{2,0}$ through 
\begin{equation}
    \operatorname{tr}(S_I) = h_I\qquad \text{ and }\qquad \operatorname{tr}(S_A) = h_A.
\end{equation}
Thanks to \eqref{eq:action curvature Kähler}, \eqref{eq:Laplacian J inv Kähler}, and the Weitzenb\"ock formula \eqref{eq:main-weitzenbock-formula}, if $S_I = \gamma\otimes \omega$ we find:
\begin{equation}\label{eq: weitz J-inv}
\begin{aligned}
     L_f h_I &= \operatorname{tr}\left(\left(\Delta^L_{H, f, 0}+\lambda\right) S_I\right)\\
     &= [\left(\Delta_{H, f, 0} + \lambda\right)\gamma]\circ  \omega.
\end{aligned}
\end{equation}
See Remark \ref{rem:kahler-soliton-conformal} for the derivation of this identity for the conformal part of the deformation $h_I$. 
Similarly, on the $J$-anti-invariant part $h_A$, using \eqref{eq: weitz J-anti}, we find:
\begin{equation}\label{eq: weitz J-anti}
    L_f h_A = \operatorname{tr}\left(\Delta^L_{H, f, 0} S_A\right) + \left(\lambda-\frac{\operatorname{scal}}{2}\right) h_A.
\end{equation}

\subsection{Spectrum of expanding and steady K\"ahler solitons}

Let $(M,g,f,J)$ be a K\"ahler Ricci soliton as above. Consider an arbitrary $2$-tensor $h$. Since $L_f$ preserves $J$-invariant and $J$-anti-invariant $2$-tensors, we find that if $h= h_I+h_A$, then we have
\[
\langle L_f h,h\rangle_{L^2_f} = \langle L_f h_I,h_I\rangle_{L^2_f}+\langle L_f h_A,h_A\rangle_{L^2_f}.
\]
Towards proving that $\langle L_f h,h\rangle_{L^2_f}\leqslant 0$, it is sufficient to show separately that $\langle L_f h_I,h_I\rangle_{L^2_f}\leqslant 0$ and $\langle L_f h_A,h_A\rangle_{L^2_f}\leqslant 0$.

In \eqref{eq: weitz J-inv}, integration by parts gives that $\langle\Delta^L_{H,f,0}S_I,S_I\rangle_{L^2_f}\leqslant 0$. Thus, we find 
\begin{equation}\label{eq: IBP hI}
    \langle L_f h_I,h_I\rangle_{L^2_f} \leqslant \lambda|h_I|^2_{L^2_f}.
\end{equation}
Similarly, thanks to \eqref{eq:action curvature Kähler} and the fact that $S_A \in \Lambda^{1,1}_0 \otimes \mathfrak{R} \Lambda^{2, 0}$, we find that 
\begin{equation}\label{eq: IBP hA}
    \langle L_f h_A,h_A\rangle_{L^2_f} \leqslant \Big(\lambda-\frac{\operatorname{scal}}{2}\Big)|h_A|^2_{L^2_f}.
\end{equation}

\begin{proof}[Proof of Theorem \ref{thm: stab}]
    If $\lambda = 0$, then $\scal\geqslant 0$ (as ancient solutions of Ricci flow, steady solitons always have nonnegative scalar curvature). In this case, \eqref{eq: IBP hI} and \eqref{eq: IBP hA} show that the $L^2_f$-spectrum of $L_f$ is nonpositive. Unless the steady soliton is flat (i.e. $\mathrm{scal} \equiv 0$), elements in the kernel of $L_f$ can only arise via harmonic $(1, 1)$-forms, composed with the K\"ahler form. 
    
    Now for expanding solitons with possibly negative scalar curvature, from \eqref{eq: IBP hI} and \eqref{eq: IBP hA}, it is sufficient to show that $ \lambda-\frac{\operatorname{scal}}{2}<0$. This is the case since as proven in \cite[Theorem 3]{prs11}, $2\lambda\leqslant \operatorname{scal}$ with equality only at Einstein metrics, and of course $\lambda<0$.
\end{proof}

\begin{remark}
    This conclusion is, of course, never possible on shrinking solitons since $\Ric$ is always an eigentensor associated with a positive eigenvalue. 
\end{remark}

\section{Instability of the BCCD soliton}

We briefly review some background. Suppose $(M, g, f)$ is a gradient shrinking Ricci soliton. Recall Perelman's entropy \cite{p02}, 
\[
\nu(g) := \inf \Big\{ \mathcal{W}(g, f, \tau) : \tau > 0, f \in C^{\infty}_0(M)\;\; \text{satisfying}\;\; (4\pi \tau)^{-\frac{n}{2}} \int_M e^{-f} d\mu_g = 1\Big\}.
\]
Metrics of gradient shrinking solitons are critical points of $\nu$, and $t \mapsto \nu(g_t)$ is monotone increasing along a (compact) Ricci flow $g_t$. The second variation of $\nu$ is given by 
\[
\delta^2 \nu_g(h) := \frac{d^2}{ds^2} \nu(g + sh) \big|_{s =0} = (4\pi)^{-\frac{n}{2}} \int_M g(N_f h, h) \, e^{-f} d\mu_g,
\]
where $N_fh := \frac{1}{2} L_fh + \mathrm{div}_f^\ast \mathrm{div}_fh + \frac{1}{2} \nabla^2 v_h - \Xi(h) \Ric$ is the stability operator. The reader may refer to \cite[Section 2.2]{no25} as well as \cite{chi04, cz12, cz24} for further background and notation. In what follows, we only need recall two important points: (1) a gradient Ricci soliton is said to be \textit{linearly unstable} (in the sense of \cite{chi04}) if there exists a deformation $h$ such that $\delta^2 \nu_g(h) > 0$; and (2) if $h \perp_{L^2_f} \Ric$ and $\mathrm{div}_f(h) = 0$, then $\delta^2 \nu_g(h)  = \frac{1}{2} (4\pi)^{-\frac{n}{2}} \langle L_f h, h \rangle_{L^2_f} > 0$.  

Next, we present the core of our argument, which is a noncompact version of the criterion for instability of shrinking Kähler Ricci solitons of \cite[Proposition 4.2]{hm11}.
\begin{proposition}\label{prop: dim H11 instab}
    Let $(M,g,f,J)$ be a shrinking Kähler Ricci soliton, and assume that 
    $$\dim \mathcal{H}^{1,1}_f\geqslant 2.$$
    Then, $(M,g,f,J)$ is linearly unstable in the sense of \cite{chi04}.
\end{proposition}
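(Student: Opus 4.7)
The plan is to construct an explicit symmetric $2$-tensor $h$ from a weighted harmonic $(1,1)$-form that realizes a positive eigenvalue of $L_f$ while being $\mathrm{div}_f$-free and $L^2_f$-orthogonal to $\Ric$. The Weitzenböck identity \eqref{eq: weitz J-inv} is the main tool. Indeed, for any $\gamma \in \Lambda^{1,1}\cap H^1_f$ with $S_I = \gamma \otimes \omega$, the formula reads
\[
L_f\,\mathrm{tr}(S_I) = \big[(\Delta_{H,f,0}+\lambda)\gamma\big]\circ \omega.
\]
If $\gamma \in \mathcal{H}^{1,1}_f$ then $\Delta_{H,f,0}\gamma = 0$, so $h := \mathrm{tr}(\gamma\otimes \omega)$ is an eigentensor of $L_f$ with eigenvalue $\lambda > 0$. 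Moreover, $\delta_{f,0}\gamma = 0$ gives $\mathrm{div}_f h = 0$ by \eqref{eq: weighted div free}. Thus, if $h \neq 0$, then $\langle L_f h, h\rangle_{L^2_f} = \lambda \|h\|^2_{L^2_f} > 0$, and it remains only to arrange $h \perp_{L^2_f} \Ric$.

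The crucial observation is that the Ricci form $\rho \in \Lambda^{1,1}$ itself lies in $\mathcal{H}^{1,1}_f$: it is automatically closed on any Kähler manifold, and the soliton identity $\rho + \mathrm{Hess}(f)^{(1,1)} = \tfrac{\lambda}{2}\omega$ combined with the contracted Bianchi identity implies $\delta_{f,0}\rho = 0$ (this is the content of the remark following \eqref{eq: weighted div free}). Under the (up to a nonzero constant) isometric injection
\[
\Lambda^{1,1} \hookrightarrow \mathrm{Sym}^2_I, \qquad \gamma \mapsto \mathrm{tr}(\gamma \otimes \omega),
\]
one checks that $\rho$ is sent to a nonzero multiple of $\Ric$ (using $\Ric_{ij} = -\rho_{ik}J^k{}_j$ and $J^2=-\mathrm{Id}$). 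Consequently, for any $\gamma \in \mathcal{H}^{1,1}_f$, one has $\langle h, \Ric\rangle_{L^2_f} = c\,\langle \gamma, \rho\rangle_{L^2_f}$ for a fixed nonzero constant $c$.

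Given $\dim\mathcal{H}^{1,1}_f \geqslant 2$, the $L^2_f$-orthogonal complement of $\rho$ inside $\mathcal{H}^{1,1}_f$ is nontrivial, so we may select $\gamma \in \mathcal{H}^{1,1}_f \setminus \{0\}$ with $\gamma \perp_{L^2_f} \rho$. Setting $h = \mathrm{tr}(\gamma\otimes \omega)$, the isomorphism above guarantees $h \neq 0$ and $h \perp_{L^2_f} \Ric$. Combined with $\mathrm{div}_f h = 0$ and $\langle L_f h, h\rangle_{L^2_f} = \lambda \|h\|_{L^2_f}^2 > 0$, the instability criterion from \cite{chi04} (recalled in the paragraph preceding the proposition) yields $\delta^2\nu_g(h) > 0$.

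The main obstacle is the regularity and integrability of $h$ in the noncompact setting: one must argue that an element of $\mathcal{H}^{1,1}_f \subset H^1_f$ actually gives rise to an $h \in H^2_f$. Weighted elliptic regularity for the weighted Hodge Laplacian $\Delta_{H,f,0}$, together with the eigenvalue equation $L_f h = \lambda h$ and standard weighted $L^2_f$ bootstrapping on shrinking solitons (which have Gaussian weight $e^{-f}$ with $f$ growing quadratically at infinity), should upgrade $\gamma \in H^1_f$ to smooth $\gamma$ with $\nabla\gamma, \nabla^2\gamma \in L^2_f$. The remaining verification, namely the isometry of the map $\gamma \mapsto \mathrm{tr}(\gamma \otimes \omega)$ up to a universal constant and the precise identification of the image of $\rho$ with $\Ric$, is a direct pointwise computation using the conventions set up in the excerpt.
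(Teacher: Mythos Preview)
Your proposal is correct and follows essentially the same approach as the paper: pick $\gamma \in \mathcal{H}^{1,1}_f$ orthogonal to the Ricci form $\rho$ (possible since $\dim \mathcal{H}^{1,1}_f \geq 2$ and $\rho \in \mathcal{H}^{1,1}_f$), set $h = \gamma \circ \omega$, and use \eqref{eq: weitz J-inv} together with \eqref{eq: weighted div free} to conclude $L_f h = \lambda h$, $\mathrm{div}_f h = 0$, and $h \perp_{L^2_f} \Ric$. You are in fact more explicit than the paper on two points---the identification of $\rho$ with $\Ric$ under $\gamma \mapsto \mathrm{tr}(\gamma \otimes \omega)$, and the $H^2_f$ regularity of $h$---the latter of which the paper's proof does not address at all.
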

\begin{remark}
    On the blowdown shrinking soliton of \cite{fik}, one has $\dim \mathcal{H}^{1,1}_f = 1$, and $\mathcal{H}^{1,1}_f$ spanned by the Ricci curvature, so the criterion does not apply. 
\end{remark}
\begin{proof}
    Let $\gamma\in \Lambda^{1,1}$ be a solution of $d\gamma = 0$ and $\delta_{f,0}\gamma=0$. Assume that $\gamma\perp_{L^2_f}\rho$ where $\rho$ is the Ricci form so that the symmetric $2$-tensor $\operatorname{tr}(\gamma\otimes\omega)$ is $L^2_f$-orthogonal to $\Ric(g) = \operatorname{tr}(\rho\otimes\omega)$.

    Then, if $\gamma\in H^1_f$, then, from \eqref{eq: weitz J-inv} which follows closely a computation of \cite{hm11} the $2$-tensor $h=h_I:=\gamma\circ\omega$ satisfies $h\perp_{L^2_f} \Ric(g)$, $\operatorname{div}_fh=0$ and
    \[
    \langle L_fh,h\rangle = \big\langle \left(\Delta_{H,f,0}+\lambda\right)\gamma\,,\,\gamma\,\big\rangle |\omega|^2  = \lambda \,|h|^2 >0,
    \]
    which concludes the proof. 
    The above deformation $h$ is also an eigentensor of $L_f$ with eigenvalue $\lambda=1$, so $t\mapsto e^{t}h$ solves the heat equation $(\partial_t-L_f)(e^{t}h) = 0$ and is uniformly bounded on $(-\infty,0]$.
\end{proof}

By Proposition \ref{prop: dim H11 instab}, in order to prove the instability of the soliton of \cite{bccd}, it is sufficient to construct a $2$-dimensional set of weighted harmonic $(1,1)$-forms. Towards this, the first step consists in fixing two independent cohomology classes. 

To that end, let $(M^4, J) := \big(\operatorname{Bl}_1(\mathbb{C}\times \mathbb{CP}^1), J \big)$ denote our complex surface with its standard complex structure. Suppose the blow-up is done at the point $p_0:= (0, p)$ and let $q \in \mathbb{CP}^1 \setminus \{p\}$ be a distinct point. Let $C_z := \{z\} \times \mathbb{CP}^1 \subset\mathbb{C}\times \mathbb{CP}^1$, and let $\pi : M^4 \to \mathbb{C} \times \mathbb{CP}^1$ denote the projection. Define 
\[
\Sigma_1 := \pi^{-1}(p_0), \qquad  \Sigma_2 := \pi^{-1}(C_1), \qquad \tilde{\Sigma} := \overline{\pi^{-1}(C_0 \setminus \{p_0\})}, \qquad \mathbb{C}_q := \pi^{-1} (\mathbb{C} \times \{q\})
\]
Then $\Sigma_1, \Sigma_2, \tilde{\Sigma}$ are each a holomorphic $\mathbb{CP}^1$ with self-intersection numbers $-1, 0, -1$, respectively. $\Sigma_1$ is the exceptional divisor and $\tilde{\Sigma}$ is the proper transform of $C_0$. On the other hand $\mathbb{C}_q$ is a complex line which does not intersect $\tilde{\Sigma}$.  

Let $H^{1,1}_0(M)$ denote the space of $(1, 1)$-forms on $M$ with compact support. The following lemma is a consequence of the far-reaching Lefschetz theorem. 

\begin{lemma}\label{lem: cohomology bccd}
    There exist two compactly-supported, closed, $(1,1)$-forms, $\tilde{\gamma}_1$ and $\tilde{\gamma}_2$, with disjoint support such that $\int_{\tilde{\Sigma}} \tilde\gamma_1= 1$ and $\int_{\tilde{\Sigma}} \tilde{\gamma}_2 = 0$, while $\int_{\mathbb{C}_q} \tilde{\gamma}_1 = 0$ and $\int_{\mathbb{C}_q} \tilde{\gamma}_2 = 1$. In particular, $\tilde{\gamma}_1, \tilde{\gamma}_2$ are independent, nontrivial elements $H^{1,1}_0(M)$.  
\end{lemma}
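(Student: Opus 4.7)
The plan is to realize $\tilde\gamma_1$ and $\tilde\gamma_2$ as compactly-supported Thom/Poincar\'e-dual representatives of two disjoint compact holomorphic divisors in $M$, chosen so that their intersection pairings with the test cycles $\tilde\Sigma$ and $\mathbb{C}_q$ form the identity matrix. The natural candidates are the exceptional divisor $\Sigma_1 = \pi^{-1}(p_0)$ and the fiber $\Sigma_2 = \pi^{-1}(C_1)$, two disjoint compact holomorphic $\mathbb{CP}^1$'s in $M$.

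First I would compute the four relevant intersection numbers: $\Sigma_1 \cdot \tilde{\Sigma} = 1$ (the exceptional divisor meets the proper transform of $C_0$ transversally at the tangent direction of $C_0$ at $p_0$); $\Sigma_1 \cdot \mathbb{C}_q = 0$ (since $q \neq p$, the curve $\mathbb{C}_q = \pi^{-1}(\mathbb{C}\times\{q\})$ is entirely disjoint from $\pi^{-1}(p_0)$); $\Sigma_2 \cdot \tilde{\Sigma} = 0$ (their images $C_1$ and $C_0$ are disjoint vertical fibers of $\mathbb{C}\times\mathbb{CP}^1$); and $\Sigma_2 \cdot \mathbb{C}_q = 1$ (they meet transversally at the single point $(1, q)$).

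Next I would construct, for each $D \in \{\Sigma_1, \Sigma_2\}$, a closed compactly-supported $(1,1)$-form $\eta_D$ with $\int_C \eta_D = C \cdot D$ for every smooth oriented $2$-submanifold $C$, compact or not. Such an $\eta_D$ can be obtained either from a Thom form of the holomorphic normal bundle $N_D$ cut off to a tubular neighborhood $U_D$, or from the Chern--Weil representative of $c_1(\mathcal{O}_M(D))$ combined with the Poincar\'e--Lelong equation and a radial cut-off near $D$; in either case the $(1,1)$-type is guaranteed because $D$ is a complex hypersurface, which is the content of the Lefschetz theorem on $(1,1)$-classes invoked in the lemma statement. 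Because $\Sigma_1$ and $\Sigma_2$ are disjoint compact sets, the tubular neighborhoods, and hence the supports of $\eta_{\Sigma_1}$ and $\eta_{\Sigma_2}$, can be chosen disjoint. Setting $\tilde\gamma_1 := \eta_{\Sigma_1}$ and $\tilde\gamma_2 := \eta_{\Sigma_2}$, the four required integrals read off from the previous step; convergence on the non-compact cycle $\mathbb{C}_q$ is automatic since $\tilde\gamma_i$ has compact support, so $\mathbb{C}_q \cap \operatorname{supp}\tilde\gamma_i$ is a compact disk.

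Independence in $H^{1,1}_0(M)$ is then immediate: the $2\times 2$ pairing matrix of $\{\tilde\gamma_1, \tilde\gamma_2\}$ against $\{\tilde\Sigma, \mathbb{C}_q\}$ is the identity, and an exact compactly-supported form pairs to zero with any closed submanifold without boundary (by Stokes on a compact exhaustion, since its restriction still has compact support); hence no nonzero linear combination $a\tilde\gamma_1 + b\tilde\gamma_2$ is exact. The main subtlety of the argument is not the homological bookkeeping, which is routine, but the production of a compactly-supported \emph{type $(1,1)$} representative of the Poincar\'e dual of a compact complex hypersurface --- a classical but non-trivial input from the Lefschetz/Poincar\'e--Lelong theory in this non-compact complex setting, which is precisely what the lemma's reference to the Lefschetz theorem is supplying.
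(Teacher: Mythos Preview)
Your proposal is correct and follows essentially the same approach as the paper: the paper also takes $\tilde\gamma_i$ to be compactly-supported Thom-class representatives of the normal bundles of the disjoint holomorphic curves $\Sigma_1$ and $\Sigma_2$, transferred to $M$ via tubular neighborhoods biholomorphic to neighborhoods in $\mathcal{O}(-1)$ and $\mathcal{O}(0)$, and reads off the required integrals from the same intersection numbers you computed. The only minor difference is that the paper cites Bott--Tu and Nicolaescu for the explicit Thom-form construction and observes directly that the Thom form of a holomorphic line bundle is $(1,1)$, whereas you phrase this via Poincar\'e--Lelong/Lefschetz; these are equivalent inputs.
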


The basic idea is to consider the Thom classes associated to the normal bundles of the surfaces $\Sigma_1$ and $\Sigma_2$, respectively. In particular, tubular neighborhoods of $\Sigma_1$ and $\Sigma_2$ in $M$ are biholomorphic to tubular neighborhoods of complex line bundles $\mathcal{O}(-1) \to \mathbb{CP}^1$ and $\mathcal{O}(0) \to \mathbb{CP}^1$. The (compactly-supported) Thom classes associated to these bundles are transferred to $M$ via the biholomorphisms. The nontriviality and independence of the forms obtained follows from the evident intersection properties of $\Sigma_1, \Sigma_2, \tilde{\Sigma}$ and $\mathbb{C}_q$. 

An explicit description of the Thom class for an oriented real plane bundle $E \to \Sigma$ may be found in a well written note of Nicolaescu \cite[Section 5]{nic11}, based on the excellent text of Bott and Tu \cite[Chapter 6]{bt82}. (In particular, see Proposition 6.24 and Proposition 6.25 in \cite{bt82}.) In our complex setting, as $\Sigma_1, \Sigma_2$ are complex surfaces in $(M, J)$, the forms representing the Thom classes obtained are necessarily $(1, 1)$.

We finally prove our main theorem.

\begin{proof}[Proof of Theorem \ref{thm: instab BCCD}]
    Let $(M,g,f,J)$ be the shrinking Kähler Ricci soliton of \cite{bccd}. We will now show that the space of weighted-harmonic $(1,1)$-forms $$\mathcal{H}_f^{1,1}:=\big\{\gamma\in \Lambda^{1,1}\cap H^1_f,\,\, d\gamma = 0, \text{ and } \delta_{f,0} \gamma=0\big\},$$
    which contains the Ricci form $\rho$ is at least $2$-dimensional. Together with Proposition \ref{prop: dim H11 instab}, this will show Theorem \ref{thm: instab BCCD}. 
    
    Consider, for $i\in\{1,2\}$, the closed compactly-supported $(1,1)$-forms $\tilde{\gamma}_i$ obtained from Lemma \ref{lem: cohomology bccd}. We now search for $\gamma_i$ in the cohomology class $ [\tilde{\gamma}_i] := \{\tilde{\gamma}_i + d \eta, \eta\in \Omega^1(M)\} $. The equation $d\gamma_i = 0$ is automatically satisfied, so we will focus on the other, and search for $\eta\in H^3_f$ so that  
    $$ \delta_{f,0}\gamma_i = \delta_{f,0}\tilde{\gamma}_i + \delta_{f,0}d\eta = 0.$$
    Now, as noted in \cite{hm11}, $\Delta_{H, f,0}$ preserves the decomposition in $(p,q)$-forms, so $\Delta_{H, f,0} \tilde{\gamma}_i$ is a $(1,1)$-form and $\Delta_{H, f,0}d\eta = -\Delta_{H, f,0} \tilde{\gamma}_i$ is automatically $(1,1)$. It is not directly clear that $\gamma_i$ is automatically $(1,1)$ by this construction, but its projection $\pi_{(1,1)}\gamma_i$ will solve our equations. 
    
    Since $\gamma_i = \tilde{\gamma}_i + d \eta$, we may additionally assume that $\delta_{f,0}\eta = 0$ without changing the value of $\gamma_i$ given that it amounts to adding $d\phi$ to $\eta$ for some function $\phi\in H^4_f$. Indeed, this is equivalent to solving $$-\Delta_f\phi = \delta_{f,0} d \phi = - \delta_{f,0}\eta.$$ This can always be solved since $\Delta_f$ has discrete spectrum and $\delta_{f,0}\eta$ is $L^2_f$-orthogonal to the cokernel, $\mathbb{R} \cdot 1$, because $\int_M\delta_{f,0}\eta \,\,e^{-f}dv = 0$ by integration by parts, since we assumed $\eta\in H^3_f$.

    Now, assuming $\delta_{f,0}\eta = 0$, our equation rewrites: 
    \begin{equation}\label{eq: 1 form BCCD}
        (\delta_{f,0}d+d\delta_{f,0})\eta = - \delta_{f,0}\tilde{\gamma}_i\in H^1_f.
    \end{equation}
    Let us show that this equation can indeed be solved. We will make use of the identity: on $1$-forms
    \[
    (\delta_{f,0}d+d\delta_{f,0}) = -\Delta_f + \Ric_f = -\Delta_f + \frac{1}{2}. 
    \]
    Now, working with $-\Delta_f$ on $1$-forms exactly as \cite{cm21} do on vector fields in Lemma 4.17, we see that $-\Delta_f$ has discrete \textit{nonnegative} spectrum on $1$-forms.
    
    Consequently, the operator $\delta_{f,0}d+d\delta_{f,0}$ also has discrete spectrum, and its eigentensors are the same as those of the operator $-\Delta_f$, with associated eigenvalues only shifted by $\frac{1}{2}$. In particular, $\delta_{f,0}d+d\delta_{f,0}$ is invertible. More concretely, let us denote $v_i\in H^1_f$ an $L^2_f$-orthonormal basis of eigentensors with eigenvalues $\mu_i>\frac{1}{2}$. One can decompose $- \delta_{f,0}\tilde{\gamma}_i = \sum_{i} b_i v_i\in H^1_f$ and solve \eqref{eq: 1 form BCCD} with $\eta = \sum_i \frac{b_i}{\mu_i}v_i\in H^1_f$. 
    \\

    A posteriori, applying $\delta_{f,0}$ to both sides of \eqref{eq: 1 form BCCD}, we see that $\delta_{f,0}\eta = 0$ is satisfied by a solution, and elliptic theory implies that $\eta\in H^3_f$. Noting that the $(1,1)$-forms $\tilde{\gamma}_1$ and $\tilde{\gamma}_2$ were non-homologous to begin with, we then obtain two independent elements $\pi_{(1,1)}\gamma_1$ and $\pi_{(1,1)}\gamma_2$ of $\mathcal{H}^{1,1}_f$, that is $\dim\mathcal{H}^{1,1}_f\geqslant2$ as desired.
\end{proof}

\section{Orbifold stability of Kähler solitons}

As suspected in \cite{bam1,bam2,bam3}, limits of specific blowups of singularities should be \textit{orbifold} shrinking Ricci solitons. The stability of an orbifold singularity at a point $p_0$ is determined by the eigenvalues of weighted selfdual curvature, $\overline{\mathbf{R}}{}^+_{p_0}$ at the point $p_0$ as explained in \cite{doorb}. If $\overline{\mathbf{R}}{}^+_{p_0}$ has one positive eigenvalue, then the singularity is \textit{unstable} while if all of the eigenvalues of $\overline{\mathbf{R}}{}^+_{p_0}$ are negative, the singularity is \textit{stable}. There are many examples of Einstein metrics with orbifold singularities, but relatively few examples are known in the shrinking case. Some examples include specific quotients of cylinders or the compact orbifold shrinking soliton of \cite{fik}.

We now consider an orbifold Kähler soliton and discuss whether they are stable or unstable in the sense of \cite{doorb}. This is luckily a very simple stability criterion which only requires knowing the curvature at the singular point of the manifold. This is particularly simple on Kähler manifolds.

With the conventions of the present article, the notion of \textit{orbifold point stability} of \cite{doorb} is determined by the sign of the eigenvalues of the \textit{weighted selfdual curvature} 
$$ \overline{\mathbf{R}}{}^+ := \frac{\operatorname{scal}}{6}\operatorname{Id}_{\Lambda^+_g} + W^+ +\frac{\Delta f}{2}\operatorname{Id}_{\Lambda^+_g} = \left(\lambda - \frac{\operatorname{scal}}{3}\right)\operatorname{Id}_{\Lambda^+_g}+ W^+, $$
of which a generalized form appears in our Weitzenböck formulae in Appendix \ref{app:weitzenbock}, see Remark \ref{rem: overline R} with $a=0$.

\begin{proposition}\label{prop: instab non kahler}
     All orbifold shrinking solitons have unstable singularities.
\end{proposition}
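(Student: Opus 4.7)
The plan is to produce a positive eigenvalue of the weighted selfdual curvature
\[
\overline{\mathbf{R}}{}^+ = \Big(\lambda-\tfrac{\operatorname{scal}}{3}\Big)\operatorname{Id}_{\Lambda^+_g} + W^+
\]
at the orbifold singular point $p_0$. In the K\"ahler case, Theorem \ref{thm: stab orb} already exhibits $\lambda>0$ as an eigenvalue (with the K\"ahler form as eigendirection), so the remaining content of the proposition is the non-K\"ahler case. There the natural strategy is via the trace: since $W^+$ is trace-free on $\Lambda^+_g$,
\[
\operatorname{tr}\overline{\mathbf{R}}{}^+(p_0)=3\lambda-\operatorname{scal}(p_0),
\]
and the largest eigenvalue of $\overline{\mathbf{R}}{}^+_{p_0}$ is bounded below by $\lambda-\operatorname{scal}(p_0)/3$. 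Thus it suffices to establish the pointwise estimate $\operatorname{scal}(p_0)<3\lambda$.

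First I would convert this scalar curvature inequality into a bound on the potential $f$. Because $p_0$ is an \emph{isolated} orbifold singularity, the local group $\Gamma\subset SO(T_{p_0}M)$ acts on the tangent space without nontrivial fixed vectors. The $\Gamma$-invariance of $\nabla f$ then forces $\nabla f(p_0)=0$, making $p_0$ a critical point of $f$. Combined with the standard Hamilton–Perelman identity $|\nabla f|^2+\operatorname{scal}=\lambda f$ for a shrinker, this yields $\operatorname{scal}(p_0)=\lambda f(p_0)$, so that $\operatorname{scal}(p_0)<3\lambda$ becomes the pointwise bound $f(p_0)<3$.

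The hard part will be this bound on $f(p_0)$. I would attempt it by showing that $p_0$ is a local minimum of $f$, in which case $\Hess f(p_0)=\tfrac{\lambda}{2}g-\Ric(p_0)\geqslant 0$ gives $\Ric(p_0)\leqslant \tfrac{\lambda}{2}g$ and hence $\operatorname{scal}(p_0)\leqslant 2\lambda<3\lambda$. To confirm this I would combine the $\Gamma$-invariant decomposition of $\Hess f(p_0)$ (which reduces to a single scalar whenever $\Gamma$ acts irreducibly on $T_{p_0}M$, since then $\Ric(p_0)$ is automatically a multiple of $g$) with the shrinker evolution identity $\Delta_f\operatorname{scal}=\lambda\operatorname{scal}-2|\Ric|^2$, so that a maximum-principle argument at the $\Gamma$-fixed critical point $p_0$ rules out $\Hess f(p_0)$ being indefinite. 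Once $\operatorname{tr}\overline{\mathbf{R}}{}^+(p_0)>0$ is in hand, the largest eigenvalue of $\overline{\mathbf{R}}{}^+_{p_0}$ is strictly positive, which is precisely orbifold instability in the sense of \cite{doorb}.
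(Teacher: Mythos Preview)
Your trace computation is exactly the paper's: $\operatorname{tr}\overline{\mathbf{R}}{}^+=3\lambda-\operatorname{scal}$, so it suffices to show this is positive. The divergence is in how that scalar curvature bound is obtained. The paper does not argue locally at the orbifold point at all; it simply invokes \cite[Theorem~3]{prs11} to assert that in dimension~$4$ one has $3\lambda-\operatorname{scal}>0$ pointwise on a shrinker, and concludes. No use is made of $\nabla f(p_0)=0$, of the Hamilton identity, or of the $\Gamma$-action.

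Your alternative route has a genuine gap. The observation $\nabla f(p_0)=0$ is correct, and the reduction to $\operatorname{scal}(p_0)<3\lambda$ is fine, but the step ``$p_0$ is a local minimum of $f$'' is neither proved nor, as far as I can see, provable from what you have written. When $\Gamma$ acts irreducibly on $T_{p_0}M$, $\Hess f(p_0)=c\,g$ is automatically \emph{definite or zero}, so there is no indefinite case to ``rule out''; the issue is the \emph{sign} of $c$, and nothing in your sketch determines it. The scalar-curvature evolution $\Delta_f\operatorname{scal}=\lambda\operatorname{scal}-2|\Ric|^2$ yields information at an extremum of $\operatorname{scal}$, not at a critical point of $f$, and even at a maximum of $\operatorname{scal}$ it only gives $|\Ric|^2\geqslant\tfrac{\lambda}{2}\operatorname{scal}$, which does not bound $\operatorname{scal}$ above. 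When $\Gamma$ acts reducibly, $\Hess f(p_0)$ can genuinely be indefinite and your argument collapses entirely. In short, the local-minimum claim is doing all the work and is unsupported; you should instead appeal directly to the global bound in \cite{prs11}, as the paper does.
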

\begin{proof}
    The trace of $\overline{\mathbf{R}}{}^+$ is $\frac{\operatorname{scal}}{2} + \frac{3}{2}\Delta f = 3\lambda-\operatorname{scal}$. By \cite[Theorem 3]{prs11}, in dimension $4$, $3\lambda-\operatorname{scal}>0$, so at least one of the eigenvalues of $\overline{\mathbf{R}}{}^+$ is positive. In other words, the orbifold singularities are unstable. 
\end{proof}

\begin{remark}
    In the Kähler case, it turns out that an unstable direction is given by the \textit{Kähler form} itself. It motivates Conjecture \ref{conj: ancient KRF orb}.
\end{remark}

\begin{proof}[Proof of Theorem \ref{thm: stab orb}]
    The result follows from combining \eqref{eq:action curvature Kähler} and \cite[Theorem 3]{prs11}.
\end{proof}

\appendix

\section{Table of Solitons}\label{app:solitons}

A table of \textit{known, smooth, simply-connected, gradient} Ricci shrinking solitons in dimension four:
\begin{table}[h]
    \centering
    \begin{tabular}{|c|c|c|c|c|c|c|c|}
        \hline
        \textbf{Name (\& Symmetry)} & \textbf{Topology} & \textbf{K} & \textbf{E} & \textbf{P} & \textbf{$\Theta$} & \textbf{Stab.}   &\textbf{Stab. Ref.} \\ 
        \hline
        Gaussian ($E_4$) &  $\mathbb{C}^2$ & K & & P & 1 &   {\color{teal}S} & - \\ 
        \hline
        Sphere ($O_5$) &  $\mathbb{S}^4$ & & E & &0.8120  & {\color{teal}S} & \cite{ham86} \\ 
        \hline
        Cylinder ($O_4\times E_1$) & $\mathbb{S}^3\times \mathbb{R}$ & & & P &  0.7910 &  {\color{teal}S} & -   \\ 
        \hline
        Cylinder ($O_3 \times E_2$) & $\mathbb{CP}^1 \times \mathbb{C}$ & K & & P &  0.7358& {\color{teal}S}& - \\ 
        \hline
        FIK Blowdown ($U_2$) & $\mathrm{Bl}_1\mathbb{C}^2$ & K &  &  & 0.6720 &{\color{teal}S}& \cite{no25}\\
        \hline
        Fubini-Study ($U_2$) &  $\mathbb{CP}^2$ & K& E & & 0.6090 &   {\color{purple}U}&  \cite{kro15} \\ 
        \hline
        BCCD ($O_2 \times O_2$)& $\mathrm{Bl}_1(\mathbb{CP}^1 \times \mathbb{C})$& K & & & 0.5617  &  {\color{purple}U} & this paper\\ 
        \hline
        -  ($O_3 \times O_3$)  &  $\mathbb{CP}^1 \times \mathbb{CP}^1$ & K& E& P & 0.5413 & {\color{purple}U}& \cite{chi04} \\ 
        \hline
        Koiso-Cao ($U_2$) & $\mathrm{Bl}_1\mathbb{CP}^2 $ &  K  & & &0.5179 & {\color{purple}U}& \cite{hm11} \\ 
        \hline 
        Page ($U_2$) & $\mathrm{Bl}_1\mathbb{CP}^2 $  & &  E &  & 0.5172 & {\color{purple}U}& \cite{hhs14} \\ 
        \hline 
        Chen-Lebrun-Weber  ($O_2 \times O_2$) & $\mathrm{Bl}_2\mathbb{CP}^2 $ & &  E &  & 0.4552 & {\color{purple}U} & \cite{bo23}\\
        \hline 
        Wang-Zhu ($O_2 \times O_2$) &  $\mathrm{Bl}_2\mathbb{CP}^2 $ & K & & & 0.4549 & {\color{purple}U} & \cite{hm11}\\ 
        \hline 
        Blowups of $\mathbb{CP}^2$, $3 \leq k \leq 8$ (-)  &$\mathrm{Bl}_k\mathbb{CP}^2 $  & K & E &  & $<$ 0.406 & {\color{purple}U} & \cite{chi04} \\ 
        \hline
    \end{tabular}
    \caption{K $=$ K\"ahler, E $=$ Einstein, P $=$ Product, $\Theta = $  Approx. Central Density, {\color{teal} S $=$ Stable}, {\color{purple} U $=$ Unstable}.}
    \label{tab:page13}
\end{table}

We offer several remarks to go along with the table:
\begin{enumerate}[(i)]
    \item A smooth K\"ahler shrinker must be simply-connected \cite{esp25} and must have uniformly bounded curvature \cite{lw23}. All K\"ahler shrinking solitons have been classified \cite{bccd,lw23,cds24}.
    \item There are stable quotient solitons (such as $\mathbb{C}^2/\mathbb{Z}_2, \mathbb{RP}^4, \mathbb{RP}^3 \times \mathbb{R}$), which have smaller central density than some simply-connected examples (half those of $\mathbb{C}^2, \mathbb{S}^4$ and $\mathbb{S}^3 \times \mathbb{R}$, respectively). In general, if $(M, g, f)$ is linearly stable, then so is its universal cover $(\tilde{M}, \tilde{g}, \tilde{f})$ with the pullback metric $\tilde{g}$ and potential $\tilde{f}$.  
    \item Tangent flows (suitably-defined) of compact 4-dimensional Ricci flows yield either a smooth gradient shrinking soliton, or else a gradient shrinking soliton with finitely many orbifold singularities, \cite{bam1, bam2, bam3}. The latter case includes, for example, the flat $\mathbb{C}^2/\mathbb{Z}_2$. Besides flat examples, all currently known linearly stable (with respect to orbifold deformations) examples are smooth. All singular examples are unstable with respect to desingularizations by known Ricci-flat ALE spaces by Proposition \ref{prop: instab non kahler}.  
    \item The Fubini-Study metric is \textit{linearly} (semi)stable (also called weakly stable), but \textit{dynamically} unstable \cite{kro15}. Indeed, a Taylor expansion of Perelman's entropy at $\mathbb{CP}^2$ has nonvanishing third-order term. The instability was independently obtained by Knopf-\v Se\v sum \cite{ks19}. See also the Hamilton's analysis in Section 10 of \cite{ham95}.
    \item Values and approximate values for the central densities can be found in \cite{chi04} and \cite{hal11}. Using the approach taken by Hall in \cite[Section 3]{hal11} and the description of the BCCD soliton as a toric Kähler Ricci soliton in \cite[Example 2.33]{ccd24}, one may show that the central density of the BCCD soliton is approximately 0.5617, as indicated in Table \ref{tab:page13}. Specifically, the density may be computed as the minimum a weighted volume functional $F$ over the moment polytope $P$ given here by
    \begin{align*}
   & \Theta(g_{\mathrm{BCCD}}) = e^{-2} \min_{c \in \mathbb{R}} F(c), \qquad F(c) := \int_P e^{-c (2x_1 + x_2)}, \\
    P&:=\{ (x_1, x_2) \in \mathbb{R}^2 : x_1\geq -1\,,\, 1 \geq x_2 \geq -1\,,\, x_1 + x_2 \geq -1\}.
    \end{align*}
    The minimizing $c$ occurs approximately at $c \approx 0.6438$.
    \item Topologically, one has $\mathrm{Bl}_k X = X \# k\,\overline{\mathbb{CP}^2}$ as differentiable manifolds.
    \item For $k \in \{5, 6, 7, 8\}$, the shrinking solitons on $\mathrm{Bl}_k \mathbb{CP}^2$ come in infinitely families (of dimensions $2,4,6,8$, respectively) which are not bi-holomorphic, but are isometric (see the introduction of \cite{lw23}). 
    \item The Page and Chen-Lebrun-Weber metrics are conformally K\"ahler. 
    \item In the (non-K\"ahler) Riemannian setting, some of the solitons above are usually written: 
    \[\mathbb{R}^4 = \mathbb{C}^2, \qquad \mathbb{S}^2\times \mathbb{R}^2 = \mathbb{CP}^1 \times \mathbb{C}, \qquad \mathbb{S}^2\times \mathbb{S}^2 = \mathbb{CP}^1 \times \mathbb{CP}^1.
    \]
    \item The named solitons in the list were discovered in \cite{p79}, \cite{koi90}, \cite{cao96}, \cite{fik} \cite{wz04},  \cite{clw08}, and \cite{bccd}. 
\end{enumerate}

\section{Weitzenb\"ock Formulae}\label{app:weitzenbock}

In this section, we collect conventions and give the derivation that leads to Proposition \ref{prop:main-weitzenbock}. Along the way, we derive some classical Weitzenb\"ock formulae and a family of lesser-known weighted formulae. We focus on dimension four with the intention of using these formulae on gradient Ricci solitons. 
\subsection{Conventions and classical formulae}\label{sec:classical-weitzenbock} 
Throughout this section, we suppose $(M^4, g)$ is an oriented $4$-dimensional Riemannian manifold and $f \in C^{\infty}(M)$ is a smooth function. Let $\{e_0, e_1, e_2, e_3\}$ be a local orthonormal frame and $\{e^0, e^1, e^2, e^3\}$ its dual on $M$. We use index notation with respect to this frame and adopt the Einstein summation convention for repeated indices. 

\subsubsection{On forms}
In what follows, $d$ and $\delta$ denote exterior derivative and codifferential, and $\Delta_H$ the Hodge Laplacian. These act on a form $\alpha$ by 
\[
d\alpha = e^i \wedge (\nabla_{e_i} \alpha), \qquad \delta \alpha =  -\,\iota_{e_i} (\nabla_{e_i} \alpha), \qquad \Delta_H \alpha  = -(d \delta + \delta d)\alpha.
\]
We adopt the normalized wedge convention,  $\alpha \wedge \beta = \frac{k!\,l!}{(k+1)!}\operatorname{Alt}(\alpha \otimes \beta)$ for $\alpha\in \Lambda^k, \beta \in \Lambda^l$, so that $\iota_X \alpha(\cdots) = k\, \alpha(X,\cdots)$. This convention implies $|e^{i_1} \wedge \cdots \wedge e^{i_k}|^2 = \frac{1}{k!}$; for instance, $e^i \wedge e^j = \frac{1}{2}(e^i \otimes e^j - e^j \otimes e^i)$. To ensure $d$ and $\delta$ are $L^2$-adjoint, as well as $e^i \wedge$ and $\iota_{e_i}$, we use the scaled inner product $\hat{g} = k! g$ on $\Lambda^k$, so that $e^{i_1} \wedge \cdots \wedge e^{i_k}$ are orthonormal. With $\alpha \in \Lambda^k$, and $\beta \in \Lambda^{k+1}$, we then have
\[
\hat g(d\alpha, \beta) -\hat g(\alpha, \delta \beta) = (k+1) \nabla_{e_i}\,( \hat{g}(\alpha, \iota_{e_i}\beta)) = (k+1)\mathrm{div}(\hat{g}(\alpha, \iota_{(\cdot)}\beta)).
\]
In particular, formally,
\[
\int_M \hat{g}(\Delta_H \alpha, \alpha ) d\mu_g = -\int_M |d\alpha|^2_{\hat g} + |\delta \alpha|_{\hat{g}}^2 \;d\mu_g. 
\]

From these definitions, one obtains the classical Weitzenb\"ock formula $\Delta_H \alpha = \Delta \alpha + e^i \wedge \iota_{e_j} [R(e_i, e_j) \alpha]$, where $R(e_i,e_j) \alpha = (\nabla_{e_j} \nabla_{e_i} -\nabla_{e_i} \nabla_{e_j} - \nabla_{[e_j, e_i]} )\alpha$. We will only use this formula on 2-forms.  To state it more concretely, recall the Kulkarni-Nomizu product acting on symmetric 2-tensors $A, B$ is given by 
\begin{equation}\label{eq:kn-formula}
(A \owedge B)_{ijkl} = A_{ik} B_{jl}  - A_{il} B_{jk} - A_{jk} B_{il} + A_{jl} B_{ik}. 
\end{equation}
This convention implies that $\mathrm{Id}_{\Lambda^2} = \frac{1}{4} g \owedge g$. 
The Ricci decomposition formula in dimension four is then given by 
\begin{equation}\label{eq:ricci-decomp}
R  = \frac{1}{2} \Ric \owedge g +  W - \frac{\scal}{3} \mathrm{Id}_{\Lambda^2} =\frac{1}{2} \mathring{\mathrm{Ric}} \owedge g +  W + \frac{\scal}{6} \mathrm{Id}_{\Lambda^2}.
\end{equation}
Finally, a section $T$ of $\Lambda^2 \otimes \Lambda^2 $ acts on a 2-form by $T(\phi)_{ij} = T_{ijkl} \phi_{kl}$. 
From these, a computation readily implies the following classical formula: 
\begin{proposition}\label{prop:weitzenbock-classic}
Suppose $\phi$ is a $2$-form on $(M^4, g)$. Then 
\[
\Delta \phi =  \Delta_H \phi -  \big(W - \frac{\scal}{3} \mathrm{Id}_{\Lambda^2}\big)(\phi).
\]
\end{proposition}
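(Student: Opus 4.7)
My plan is to apply the classical Weitzenböck identity for the Hodge Laplacian recalled immediately before the statement,
\[
\Delta_H \phi = \Delta \phi + e^i \wedge \iota_{e_j}\bigl[R(e_i, e_j)\phi\bigr],
\]
and evaluate the zeroth-order term explicitly on a $2$-form in dimension four using the Ricci decomposition~\eqref{eq:ricci-decomp}.

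First, I would compute $R(e_i, e_j)\phi$ using that $R(e_i,e_j)$ acts as a derivation on $\Lambda^\bullet T^\ast M$, producing two terms schematically of the form $R_{ijap}\phi^p{}_b$ and $R_{ijbp}\phi_a{}^p$. Applying $\iota_{e_j}$ and then $e^i\wedge$, and using the normalized-wedge conventions of Section~\ref{sec:classical-weitzenbock}, I would arrive at an identity of the form
\[
\bigl(e^i\wedge \iota_{e_j}[R(e_i,e_j)\phi]\bigr)_{ab} = c_1\, R_{abkl}\phi^{kl} + c_2\bigl(\Ric_a{}^k\phi_{kb}-\Ric_b{}^k\phi_{ka}\bigr),
\]
for explicit constants $c_1,c_2$ dictated by the paper's conventions. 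The first Bianchi identity combined with the antisymmetry of $\phi$ (which yields $R_{akbl}\phi^{kl}=\tfrac12 R_{abkl}\phi^{kl}$) is the key algebraic input at this step.

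Next, I would substitute the Ricci decomposition $R = \tfrac12\Ric\owedge g + W - \tfrac{\scal}{3}\mathrm{Id}_{\Lambda^2}$ into the contraction $R_{abkl}\phi^{kl}$. A direct expansion of the Kulkarni--Nomizu product~\eqref{eq:kn-formula} gives
\[
(\Ric\owedge g)(\phi)_{ab} = 2\bigl(\Ric_a{}^k\phi_{kb}-\Ric_b{}^k\phi_{ka}\bigr),
\]
so the Ricci piece of $R(\phi)$ produces a term proportional to $\Ric_a{}^k\phi_{kb}-\Ric_b{}^k\phi_{ka}$ which exactly cancels the second term in the schematic expansion above, by design of the Ricci decomposition. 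The remaining Weyl and scalar contributions, combined with the paper's sign convention $R(e_i,e_j) = \nabla_{e_j}\nabla_{e_i}-\nabla_{e_i}\nabla_{e_j}-\nabla_{[e_j,e_i]}$, yield exactly $\Delta_H\phi - \Delta\phi = \bigl(W - \tfrac{\scal}{3}\mathrm{Id}_{\Lambda^2}\bigr)(\phi)$, which is the claim.

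The main obstacle is pure bookkeeping of constants: the paper's normalized wedge convention introduces factors of $k!$ in the inner product (so that $(\iota_{e_j}\psi)_a = 2\psi_{ja}$ on $2$-forms and $e^i\wedge(\cdot)$ antisymmetrizes with a factor of $1/2$), and the paper's sign for $R(e_i,e_j)$ is opposite to the one most commonly used. Getting $c_1,c_2$ right so that the Ricci cancellation lands on the nose requires care but involves no conceptual difficulty; the essential content is the Ricci cancellation, a direct consequence of the first Bianchi identity and the Ricci decomposition in dimension four.
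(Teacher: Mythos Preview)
Your proposal is correct and follows precisely the route the paper indicates: the paper states the general Weitzenb\"ock identity $\Delta_H\alpha = \Delta\alpha + e^i\wedge\iota_{e_j}[R(e_i,e_j)\alpha]$ and then simply asserts that ``a computation readily implies'' Proposition~\ref{prop:weitzenbock-classic}, without spelling out the computation. Your outline---expanding the curvature term on a $2$-form, invoking the first Bianchi identity to produce the $R_{abkl}\phi^{kl}$ piece, and using the Ricci decomposition~\eqref{eq:ricci-decomp} so that the $\tfrac{1}{2}\Ric\owedge g$ contribution cancels the residual Ricci terms---is exactly that computation, and your caution about the paper's normalized-wedge and curvature-sign conventions is well placed.
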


\subsubsection{On bi-forms}\label{sec:bi-forms}
Given any vector bundle $E$ over $M$ equipped with a metric and connection, there are natural extensions $d^E$ and $\delta^E$ of the exterior derivative and codifferential to $E$-valued $k$-forms, i.e. sections of $\Lambda^k \otimes E$. If one takes $E = \Lambda^p$ with the natural extensions of the metric $g$ and Levi-Civita connection $\nabla$, one obtains left and right acting differentials and co-differentials. Given $\alpha \in \Lambda^k$ and $\beta \in \Lambda^l$, these operators act according to the formulas 
\begin{align*}
d^L (\alpha \otimes \beta) &= (d\alpha) \otimes \beta + (-1)^k (\alpha \wedge \nabla \beta), 
& d^R(\alpha \otimes \beta) &= \alpha \otimes (d \beta) +(-1)^l(\nabla\alpha \wedge \beta) \\
\delta^L (\alpha \otimes \beta) &= (\delta \alpha) \otimes \beta - \mathrm{tr}_g(\iota_{(\cdot)} \alpha \otimes \nabla_{(\cdot)}\beta) , 
& \delta^R(\alpha \otimes \beta) &= \alpha \otimes (\delta \beta) - \mathrm{tr}_g(\nabla_{(\cdot)}\alpha \otimes \iota_{(\cdot)}  \beta),
\end{align*}
where
\[
\alpha \wedge \nabla \beta :=\sum_{i =1}^n (\alpha \wedge e^i)\otimes \nabla_{e_i} \beta, \qquad \nabla \alpha \wedge \beta := \sum_{i =1}^n \nabla_{e_i} \alpha \otimes (\beta \wedge e^i),
\]
and 
\[
\mathrm{tr}_g\big(\iota_{(\cdot)} \alpha \otimes \nabla_{(\cdot)}\beta\big):= \sum_{j =1}^n ((\iota_{e_j}\alpha) \otimes \nabla_{e_j}\beta), \qquad \mathrm{tr}_g\big(\nabla_{(\cdot)} \alpha \otimes \iota_{(\cdot)}\beta\big):= \sum_{j =1}^n ((\nabla_{e_j}\alpha) \otimes \iota_{e_j}\beta). 
\]
To give a concrete example, suppose $S = \{S_{ijpq}\}$ is a section of $\Lambda^2 \otimes \Lambda^2$. Then $d^L S$ is a section of $\Lambda^3 \otimes \Lambda^2$, $\delta^L S$ a section of $\Lambda^1 \otimes \Lambda^2$, $d^R S$ a section of $\Lambda^2 \otimes \Lambda^3$, and $\delta^R S$ a section of $\Lambda^2 \otimes \Lambda^1$ each given by 
\begin{align*}
(d^L S)_{ijkpq} &= \frac{1}{3} \big(\nabla_i S_{jkpq} + \nabla_j S_{kipq} + \nabla_k S_{ijpq}\big),& (d^R S)_{ijpqr} &= \frac{1}{3} \big(\nabla_r S_{ijpq} + \nabla_p S_{ijqr} + \nabla_q S_{ijrp}\big)\\
(\delta^L S)_{ipq} &= -2 \nabla_k S_{kipq}, & (\delta^R S)_{ijp} &= -2 \nabla_q S_{ijqp}.
\end{align*}
With the differentials and co-differentials above, we define the left and right Hodge Laplacians
\[
\Delta^L_H = - (d^L \delta^L + \delta^L d^L), \qquad \Delta^R_H = - (d^R \delta^R + \delta^R d^R).
\]
As above, use the scaled inner product 
\[
\hat{g} = k!\, l! \,g  \quad \text{on} \quad \Lambda^k \otimes \Lambda^l,
\]
with respect to which these Laplacians are self adjoint. 
The space $\Lambda^2 \otimes \Lambda^2$ comes with two natural operations, 
\begin{align*}
(S \circ T)_{ijkl} &= S_{ijpq} T_{pqkl}, \\
(S\# T)_{ijkl} &= S_{ipkq} T_{jplq} - S_{iplq} T_{jpkq} - S_{jpkq} T_{iplq} + S_{jplq} T_{ipkq}.
\end{align*}
for any sections $S$ and $T$. Note that $S \# T = T\#S$. We define $S^2 := S \circ S$ and $S^\# := S \# S$. The operation $\#$ has an interpretation in terms of the Lie bracket on $\mathrm{so}(n)$. Indeed, on 2-forms $\phi, \psi$, the Lie bracket is a 2-form given by $[\phi, \psi]_{ij} = \phi_{ik} \psi_{jk} - \psi_{ik} \phi_{jk}$. Then
\begin{align}\label{eq:sharp-lie}
(\phi \otimes \tilde{\phi}) \# (\psi \otimes \tilde{\psi}) & = [\phi, \psi] \otimes [\tilde{\phi}, \tilde{\psi}]. 
\end{align}

With these conventions and notation established, we may now state our second Weitzenb\"ock formula for sections of $\Lambda^2 \otimes \Lambda^2$. 

\begin{proposition}\label{prop:weitzenbock-bi-forms}
Suppose $S$ is a section of $\Lambda^2 \otimes \Lambda^2$ on $(M^4, g)$. Then 
\begin{align*}
\Delta S = \Delta^L_H S -  \big( W-\frac{\scal}{3} \mathrm{Id}_{\Lambda^2}\big) \circ S - R \# S, \\
\Delta S  =  \Delta^R_H S - S \circ \big( W-\frac{\scal}{3} \mathrm{Id}_{\Lambda^2}\big) -  S \# R.
\end{align*}
\end{proposition}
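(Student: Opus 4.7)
The plan is to establish the first identity by a direct local computation, treating $S$ as a $\Lambda^2$-valued $2$-form (the first pair of indices being the ``form'' slot, the second pair the ``fiber'' slot), and then to obtain the second identity by symmetry. The starting point is the general principle underlying Proposition \ref{prop:weitzenbock-classic}: the twisted Hodge Laplacian $\Delta_H^L = -(d^L\delta^L+\delta^L d^L)$ differs from the rough Laplacian $\Delta$ by a zeroth-order curvature endomorphism arising from commuting a $\nabla_i$ with a $\nabla_j$ inside the wedge$/$contraction combinatorics encoded in $d^L$ and $\delta^L$.

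First I would unfold the definitions from Section \ref{sec:bi-forms} in local frames. Using the explicit formulas for $(d^L S)_{ijkpq}$ and $(\delta^L S)_{ipq}$, a direct calculation gives
\begin{equation*}
    (d^L\delta^L S+\delta^L d^L S)_{ijkl}
    = -\Delta S_{ijkl} + \big(e^a\wedge\iota_{e_b}[\nabla_a,\nabla_b]S\big)_{ijkl},
\end{equation*}
where the commutator acts on $S_{ijkl}$ through all four indices via the standard formula $[\nabla_a,\nabla_b]S_{ijkl} = -R_{abim}S_{mjkl}-R_{abjm}S_{imkl}-R_{abkm}S_{ijml}-R_{ablm}S_{ijkm}$. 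The resulting curvature endomorphism splits cleanly into two pieces: the terms touching the indices $i,j$ (the ``form'' slot) and those touching $k,l$ (the ``fiber'' slot).

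The form-slot piece is precisely the operator computed in the classical proof of Proposition \ref{prop:weitzenbock-classic} applied to each fixed value of the fiber indices. Substituting the Ricci decomposition \eqref{eq:ricci-decomp}, the contributions from $\tfrac12\Ric\owedge g$ cancel against pieces of the commutator, and what remains assembles into a left-composition $\big(W-\tfrac{\scal}{3}\mathrm{Id}_{\Lambda^2}\big)\circ S$. The fiber-slot piece, on the other hand, comes from applying the $\mathfrak{so}(TM)$-action on $\Lambda^2$ (i.e.\ the adjoint action) to the second factor of $S$, weighted by the wedge$/$contract structure of $e^a\wedge\iota_{e_b}$. Writing things out index by index yields a four-term combination which I claim is exactly $R\#S$.

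The cleanest way to verify this last identification is through the Lie-bracket formula \eqref{eq:sharp-lie}. Both the fiber-slot curvature term and $R\#S$ are bilinear and natural in $(R,S)\in(\Lambda^2\otimes\Lambda^2)^{\otimes 2}$, so it suffices to check equality on elementary tensors $R=\phi\otimes\tilde\phi$ and $S=\psi\otimes\tilde\psi$: both sides then reduce to $[\phi,\psi]\otimes[\tilde\phi,\tilde\psi]$ up to the correct sign, with the bracket structure produced by the two pairs of antisymmetric index contractions. This gives the first formula. The second formula follows by applying the identical argument with the roles of the left and right $\Lambda^2$ factors swapped: now the form-slot piece produces $S\circ(W-\tfrac{\scal}{3}\mathrm{Id}_{\Lambda^2})$ and the fiber-slot piece produces $S\#R=R\#S$.

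The routine part is the commutator bookkeeping in local frames; the step that needs real care is the identification of the fiber-slot curvature term with $R\#S$, because $\#$ is defined by a not-obviously-symmetric combination of four index contractions. Reducing to simple tensors via \eqref{eq:sharp-lie} and tracking the signs imposed by the scaled inner product $\hat g=k!\,l!\,g$ should be the main obstacle; once this matching is done, both Weitzenb\"ock identities follow uniformly.
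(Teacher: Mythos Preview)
Your approach is correct and is exactly the standard derivation that the paper implicitly relies on; the paper does not give an explicit proof of this proposition, treating it as a routine extension of the classical Weitzenb\"ock formula (Proposition~\ref{prop:weitzenbock-classic}) to $\Lambda^2$-valued $2$-forms via the conventions of Sections~\ref{sec:classical-weitzenbock}--\ref{sec:bi-forms}. The split into form-slot and fiber-slot pieces, the reduction of the form-slot piece to the classical formula, and the verification of the fiber-slot piece as $R\#S$ on simple tensors using \eqref{eq:sharp-lie} are all the right moves.

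One small expositional point: your sentence ``the contributions from $\tfrac12\Ric\owedge g$ cancel against pieces of the commutator'' is misleading. The commutator term $e^a\wedge\iota_{e_b}[\nabla_a,\nabla_b]$ acting on the form slot \emph{is} the curvature endomorphism; there is nothing separate for it to cancel against. What actually happens is that this endomorphism, when written out on $2$-forms in dimension four and the Ricci decomposition \eqref{eq:ricci-decomp} is substituted, involves both a Riemann-type contraction and Ricci-type traces, and these combine algebraically so that only $W-\tfrac{\scal}{3}\mathrm{Id}_{\Lambda^2}$ survives. That is precisely the content of Proposition~\ref{prop:weitzenbock-classic}, which you can simply invoke for the form-slot piece rather than re-deriving it. With that clarification, your argument is complete.
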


\subsection{Weighted formulae}\label{sec:weighted-operators}

In the context of Ricci solitons, weighted Weitzenb\'ock formula, i.e. those involving the weighed Laplacian $\Delta_f = \Delta - \nabla_{\nabla f}$ are more natural. To describe them, we introduce a family of weighted differentials for $a \in [0, 1]$, among which the operators for $a = 0$ (\cite{hm11}) and $a = \frac{1}{2}$ (the balanced one) are most important. 

Let $f \in C^{\infty}(M)$. We define
\[
d_{f, a}(\cdot) := e^{af} d(e^{-af}\, \cdot \,  ), \qquad \delta_{f, a}(\cdot) := e^{(1-a)f} \delta(e^{(a-1)} \, \cdot \, ), \qquad \Delta_{H, f, a}  = - (d_{f, a} \delta_{f, a} + \delta_{f, a} d_{f, a}). 
\]
These formulas imply that $d_{f, a} = d - a df \wedge$ and $\delta_{f, a} =  \delta + (1- a) \iota_{\nabla f}$. We similarly define $d^L_{f, a} ,\delta^L_{f, a}, \Delta^L_{H, f, a}$, as well as $d^R_{f, a}, \delta^R_{f, a}, \Delta^R_{H, f, a}$. 

A straightforward computation yields the following lemma, which relates the weighted formulas to the classical ones.  

\begin{lemma}\label{lem:weighted-with-a}
On a 2-form $\phi$,
\begin{align*}
\Delta_H \phi -\nabla_{\nabla f} \phi    & =  \Delta_{H, f, a}\phi -  (a-\frac{1}{2}) \big(\mathrm{Hess} f \owedge g\big)(\phi) + a \Big(\Delta f -(1-a) |\nabla f|^2\Big) \phi.
\end{align*}
On a section of $S$ of $\Lambda^2 \otimes \Lambda^2$,
\begin{align*}
\Delta^L_H S  -(\nabla_{\nabla f} S) &=  \Delta^L_{H, f, a} S   - (a-\frac{1}{2}) \big(\mathrm{Hess} f \owedge g\big) \circ S + a \Big(\Delta f -(1-a) |\nabla f|^2\Big) S, \\
 \Delta^R_H S  -(\nabla_{\nabla f} S)  &=\Delta^R_{H, f, a} S  - (a-\frac{1}{2})S \circ \big(\mathrm{Hess} f \owedge g\big)  + a \Big(\Delta f -(1-a) |\nabla f|^2\Big) S.
\end{align*}
\end{lemma}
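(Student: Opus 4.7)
The proof is a direct expansion using a few standard commutator identities. Substituting $d_{f,a} = d - a\,df\wedge$ and $\delta_{f,a} = \delta + (1-a)\iota_{\nabla f}$ into $\Delta_{H,f,a} = -(d_{f,a}\delta_{f,a} + \delta_{f,a}d_{f,a})$ and collecting terms against $\Delta_H = -(d\delta+\delta d)$ yields the operator identity
\[
\Delta_{H,f,a} - \Delta_H = -(1-a)\{d,\iota_{\nabla f}\} + a\{df\wedge,\delta\} + a(1-a)\{df\wedge,\iota_{\nabla f}\},
\]
where $\{A,B\} := AB+BA$. The proof reduces to identifying these three anticommutators on $2$-forms. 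Two are classical: Cartan's magic formula gives $\{d,\iota_{\nabla f}\} = L_{\nabla f}$, and the Clifford-type relation gives $\{df\wedge,\iota_{\nabla f}\} = |\nabla f|^2\cdot\mathrm{Id}$. Next, I would rewrite the Lie derivative on a $2$-form by comparing the coordinate formula $(L_X\phi)_{ij} = (\nabla_X\phi)_{ij} + \phi_{kj}\nabla_iX^k + \phi_{ik}\nabla_jX^k$ for $X = \nabla f$ with the explicit action $(A\owedge g)(\phi)_{ij} = 2(A_i{}^k\phi_{kj} + A_j{}^k\phi_{ik})$ of the Kulkarni-Nomizu product on $\phi\in\Lambda^2$; using the symmetry of $\Hess f$ one obtains
\[
L_{\nabla f}\phi = \nabla_{\nabla f}\phi + \tfrac{1}{2}(\Hess f \owedge g)(\phi).
\]

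The core calculation is the remaining anticommutator $\{df\wedge,\delta\}$ on a $2$-form $\phi$. Using the normalized-wedge expression $(df\wedge\phi)_{abc} = \tfrac{1}{3}\big((df)_a\phi_{bc} + (df)_b\phi_{ca} + (df)_c\phi_{ab}\big)$ and the corresponding formula $(\delta\alpha)_{bc} = -3\nabla^a\alpha_{abc}$ for the codifferential on a $3$-form, one computes $\delta(df\wedge\phi)$ by distributing $\nabla^a$ via the product rule. The trace $\nabla^a\nabla_af = \Delta f$ produces the scalar term; the off-diagonal derivatives $\nabla^a\nabla_bf = (\Hess f)_b{}^{a}$ produce Hessian-composition terms which, after using the antisymmetry of $\phi$, assemble into $\tfrac{1}{2}(\Hess f\owedge g)(\phi)$; the derivatives of $\phi$ split into a $\nabla_{\nabla f}\phi$ term plus pieces proportional to $\delta\phi$ which cancel precisely against $df\wedge\delta\phi$. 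Collecting yields
\[
\{df\wedge,\delta\}\phi = -\nabla_{\nabla f}\phi + \tfrac{1}{2}(\Hess f \owedge g)(\phi) - \Delta f\cdot\phi.
\]
Substituting these three identifications into the expansion of $\Delta_{H,f,a}$ and collecting the coefficients of $\nabla_{\nabla f}\phi$, $(\Hess f\owedge g)\phi$, $\Delta f\cdot\phi$ and $|\nabla f|^2\phi$ gives the stated $2$-form identity.

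For the bi-form identities, a direct conjugation-style computation shows the weighted left operators differ from $d^L,\delta^L$ only by $(df\wedge)^L$ and $\iota^L_{\nabla f}$ acting on the left $\Lambda^2$-factor, so the expansion of $\Delta^L_{H,f,a} - \Delta^L_H$ has the same form with three analogous anticommutators. Evaluating these on a decomposable $S = \alpha\otimes\beta$, the cross-terms coming from the $\alpha\wedge\nabla\beta$ and $\mathrm{tr}_g(\iota_{(\cdot)}\alpha\otimes\nabla_{(\cdot)}\beta)$ couplings in $d^L, \delta^L$ combine to turn the left-factor-only $(\nabla_{\nabla f}\alpha)\otimes\beta$ into the full covariant derivative $\nabla_{\nabla f}S$, while the Hessian contributions assemble into $(\Hess f\owedge g)\circ S$; the $|\nabla f|^2$ and $\Delta f$ terms pass through untouched. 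The right-version then follows by the evident left-right symmetry, with $S\circ(\Hess f\owedge g)$ replacing $(\Hess f\owedge g)\circ S$. The main obstacle is the bookkeeping in these cross-terms---ensuring the left/right couplings assemble cleanly into $\nabla_{\nabla f}S$ rather than just its left-slot piece. A useful sanity check for the underlying $2$-form calculation is the $0$-form specialization $\{df\wedge,\delta\}u = \delta(u\,df) = -\nabla_{\nabla f}u - u\Delta f$, which agrees with our formula since $(\Hess f\owedge g)$ vanishes on scalars.
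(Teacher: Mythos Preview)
Your proposal is correct and is precisely the ``straightforward computation'' the paper alludes to without giving details. The expansion of $\Delta_{H,f,a}-\Delta_H$ into the three anticommutators, their identification via Cartan's formula, the Clifford relation, and the adjoint computation of $\{df\wedge,\delta\}$, and the passage to bi-forms by tracking the cross-terms in $d^L,\delta^L$ all check out and match the paper's conventions.
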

For every $a$, one has $d_{f, a}$ and $\delta_{f, a}$ are formally adjoint in the weighted $L^2$-space, $L^2_f = L^2(e^{-f} d\mu_g)$. In particular, for $\alpha \in \Lambda^k$ and $\beta \in \Lambda^{k+1}$, one has 
\[
\hat{g}(d_{f, a} \alpha, \beta) e^{-f} - \hat{g}(\alpha, \delta_{f, a} \beta)e^{-f}  =(k+1) \mathrm{div}_f\big(\hat{g}(\alpha, \iota_{(\cdot)}\beta)\big) e^{-f},
\]
where $\mathrm{div}_f(\cdot) = e^f \mathrm{div}(e^{-f}\cdot)$ Similar formulas yield the adjoint properties of the left and right operators in bi-forms. For example, formally, 
\[
\int_M \hat{g}(\Delta^L_{H, f, a} S, S ) \, e^{-f} d\mu_g = - \int_M \big(|d^L_{f,a} S|^2_{\hat{g}} + |\delta^L_{f,a} S|^2_{\hat{g}}\big) e^{-f} d\mu_g. 
\]

\subsection{The trace map and $2$-tensors}\label{sec:trace}
The Hodge star is defined by the identity 
\[
\alpha \wedge \ast \beta = \hat{g}(\alpha, \beta) \mu_g,
\]
where $\mu_g = e^0 \wedge e^1 \wedge e^2 \wedge e^3$ in a local, oriented, orthonormal frame. Using the scaled metric $\hat{g}$ ensures, for instance, that $\ast e^0 =  e^1 \wedge e^2 \wedge e^3$ and $\ast (e^0 \wedge e^1) = e^2 \wedge e^3$. In dimension four, the Hodge star yields a decomposition of $2$-forms into self-dual ($\ast \alpha = \alpha$) and anti-self-dual 2-forms ($\ast \alpha = - \alpha$), giving $\Lambda^2 = \Lambda_g^+ \oplus \Lambda_g^-$. The oriented local frames gives natural bases
\begin{align*}
    \omega_1^{\pm} = e^0 \wedge e^1 \pm e^2 \wedge e^3, \qquad \omega_2^{\pm} = e^0 \wedge e^2 \pm e^3 \wedge e^1 , \qquad \omega_3^{\pm} = e^0 \wedge e^3 \pm e^1 \wedge e^2.  
\end{align*}
Note our conventions imply $|\omega_a^{\pm}| = 1$, or equivalently, $|\omega_a^\pm|_{\hat{g}} = \sqrt{2}$. 

On $\Lambda^2 \otimes \Lambda^2$, the one naturally has a left-acting Hodge star $\ast^L$ and a right acting Hodge star $\ast^R$, and from this one obtains a well-known decomposition 
\[
\Lambda^2 \otimes \Lambda^2 = (\Lambda^+_g \otimes \Lambda^+_g )\oplus (\Lambda^+_g \otimes \Lambda^-_g )\oplus (\Lambda^-_g \otimes \Lambda^+_g )\oplus (\Lambda^-_g \otimes \Lambda^-_g ). 
\]
It is useful to consider the curvature tensor in this decomposition. In particular, noting \eqref{eq:ricci-decomp}, we recall that (because $\ast W = W \ast$) the Weyl component is the sum of self-dual and anti-selfdual parts $W = W^+ + W^-$, and that traceless Ricci component, $\mathring{\Ric} \owedge g$, lies in $(\Lambda^-_g \oplus \Lambda^+_g)\oplus (\Lambda^+_g \otimes \Lambda^-_g)$. 

A key observation in dimension four (e.g. in the study of deformations of metrics), going back to Besse \cite{bes87}, is that there is a bijection, 
\begin{equation}
    \Lambda_g^- \otimes \Lambda_g^+ \cong \mathrm{Sym}^2_0\; T^\ast M,
\end{equation} 
between pairs of self-dual and anti-self-dual 2-forms and traceless symmetric 2-tensors.   In general, given a section $S_{ijkl}$ of $\Lambda^2 \otimes \Lambda^2$, one defines a symmetric $2$-tensor by 
\begin{equation}
\mathrm{tr}_g(S)_{ik} := \frac{1}{2}(S_{ipkp} + S_{kpip}).
\end{equation}
We will sometimes write $\mathrm{tr}$ instead of $\mathrm{tr}_g$ when the metric is understood. When $S = \alpha \otimes \beta$, we sometimes denote this map (abusing notation) by 
\begin{equation}
\alpha \circ \beta := \mathrm{tr}_g(\alpha \otimes \beta). 
\end{equation}
By orthogonality of self-dual and anti-self-dual forms, the trace maps a section of $\Lambda_g^- \otimes \Lambda_g^+$ to a traceless symmetric $2$-tensor. This is a bijection, with inverse given by 
\begin{equation}
\mathring{h} \mapsto \frac{1}{2} \pi_{-, +} (\mathring{h} \owedge g),
\end{equation}
where $\pi_{-,+} = \frac{1}{4}(1-\ast^L)(1+\ast^R)$ is the projection from $\Lambda^2 \otimes \Lambda^2$ to $\Lambda_g^-\otimes \Lambda_g^+$. Formulas for $\omega_a^- \circ \omega_b^+$ can be found in \cite[Section 2.3]{no25}. For $S, T \in \Lambda^-_g \otimes \Lambda^+_g$, we have
\[
g\big(\mathrm{tr}(S), \mathrm{tr}(T)\big) = \frac{1}{4} g(S, T) = \frac{1}{16}\hat{g}(S, T).
\]
This can be checked pointwise in the duality basis $\omega_a^- \circ \omega_b^+$. 

For application to the study of stability of Einstein manifolds, one seeks to relate the stability operator $L = \Delta + 2R$ on traceless 2-tensors, to the Hodge Laplacian on sections of $\Lambda^-_g \otimes \Lambda^+_g$. In the following proposition, we derive this formula. It is usually applied assuming that $\mathring{\Ric} = 0$. We sketch the proof. 

\begin{proposition}\label{prop:unweighted-2-tensors}
Suppose $S$ is a section of $\Lambda^-_g \otimes \Lambda^+_g$ and $h = u \, g +  \mathrm{tr}(S)$ on $(M^4, g)$. Then
\begin{align*}
Lh  &= (\Delta u) g + 2u\,\Ric+ \frac{1}{2}g(\mathring{\mathrm{Ric}}, h)g + \mathrm{tr}\Big(\Delta^L_H S + S \circ \big (W^+ +  \frac{\scal}{6}\mathrm{Id}_{\Lambda^+_g}\big)\Big), \\
Lh& = (\Delta u) g \, + 2 u \, \Ric +   \frac{1}{2}g(\mathring{\mathrm{Ric}},h)g + \mathrm{tr}\Big(\Delta^R_H S + \big (W^- +  \frac{\scal}{6}\mathrm{Id}_{\Lambda^-_g}\big) \circ S \Big).
\end{align*}
\end{proposition}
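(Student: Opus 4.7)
The plan is to apply $L = \Delta + 2R$ separately on the conformal piece $ug$ and the traceless piece $\mathring{h} := \mathrm{tr}(S)$ of $h$. For the conformal part, the computation is immediate: $\nabla g = 0$ gives $\Delta(ug) = (\Delta u)g$, and $R_{ipkq}g_{pq} = \mathrm{Ric}_{ik}$ gives $2R(ug) = 2u\,\mathrm{Ric}$. Hence $L(ug) = (\Delta u)g + 2u\,\mathrm{Ric}$.

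For the traceless piece, the key observation is that the trace map $\mathrm{tr}:\Lambda^2\otimes\Lambda^2\to\mathrm{Sym}^2$ commutes with the Levi-Civita rough Laplacian, so $\Delta\mathring{h} = \mathrm{tr}(\Delta S)$. Applying the bi-form Weitzenböck formula of Proposition~\ref{prop:weitzenbock-bi-forms} then gives
\[
\Delta\mathring{h} \;=\; \mathrm{tr}(\Delta^L_H S) \;-\; \mathrm{tr}\bigl((W - \tfrac{\scal}{3}\mathrm{Id}_{\Lambda^2})\circ S\bigr) \;-\; \mathrm{tr}(R\#S).
\]
It remains to combine these two curvature terms with $2R(\mathring{h})$ into the form claimed.

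Two special features of dimension 4 make this possible. First, since $W = W^+ + W^-$ with $W^\pm|_{\Lambda^\mp_g}=0$, the left composition satisfies $W\circ S = W^-\circ S$ for $S\in\Lambda^-_g\otimes\Lambda^+_g$. Second, by the Lie-bracket interpretation \eqref{eq:sharp-lie} and the identity $[\Lambda^+_g,\Lambda^-_g]=0$ that holds in dimension 4, the $\#$-product $R\#S$ retains only the $\Lambda^-_g\otimes\Lambda^+_g$ block of $R$; by the Ricci decomposition \eqref{eq:ricci-decomp} this block is proportional to $\mathring{\mathrm{Ric}}\owedge g$. Substituting $R = \tfrac12\mathring{\mathrm{Ric}}\owedge g + W + \tfrac{\scal}{6}\mathrm{Id}_{\Lambda^2}$ into $2R(\mathring{h})$ and $\mathrm{tr}(R\#S)$ and regrouping, the traceless-Ricci contributions reassemble into $\tfrac12 g(\mathring{\mathrm{Ric}},h)g$ (using $g(\mathring{\mathrm{Ric}}, ug)=0$), and the Weyl-plus-scalar contributions reassemble into $\mathrm{tr}\bigl(S\circ(W^+ + \tfrac{\scal}{6}\mathrm{Id}_{\Lambda^+_g})\bigr)$. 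The second formula is obtained by the mirror argument, starting instead from the right-acting Weitzenböck $\Delta S = \Delta^R_H S - S\circ(W-\tfrac{\scal}{3}\mathrm{Id}_{\Lambda^2}) - S\#R$, in which $S\circ W$ already picks out $S\circ W^-$.

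The main obstacle is the last bookkeeping step: verifying that the \emph{left}-acting $W^-$ and scalar-identity terms from the Weitzenböck, together with the zeroth-order contributions of $2R(\mathring{h})$, algebraically collapse to the \emph{right}-acting combination $W^+ + \tfrac{\scal}{6}\mathrm{Id}_{\Lambda^+_g}$ acting on $S$, with the precise constants matching. In practice this is best organized using the explicit duality basis $\{\omega_a^\pm\}$ of Section~\ref{sec:trace}, where the pointwise identities for $\omega_a^-\circ\omega_b^+$ make the conversion between left and right curvature actions transparent.
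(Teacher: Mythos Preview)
Your approach is essentially the paper's: handle the conformal piece directly, use $\mathrm{tr}\circ\Delta=\Delta\circ\mathrm{tr}$ together with the bi-form Weitzenb\"ock of Proposition~\ref{prop:weitzenbock-bi-forms}, reduce $R\#S$ to $\tfrac12(\mathring{\Ric}\owedge g)\#S$ via the Lie-bracket identity (the paper phrases this as $W\#S=\mathrm{Id}_{\Lambda^2}\#S=0$ from $\ast$ commuting with $\#$), and then combine with $2R(\mathring h)$ computed through the Ricci decomposition. The paper carries out your ``main obstacle'' step by recording three pointwise identities---$\mathrm{Id}_{\Lambda^2}(\mathrm{tr}(S))=-\tfrac12\mathrm{tr}(S)$, $(\mathring{\Ric}\owedge g)(\mathrm{tr}(S))=\tfrac12 g(\mathring{\Ric},\mathrm{tr}(S))g+\tfrac12\mathrm{tr}((\mathring{\Ric}\owedge g)\#S)$, and $W(\mathrm{tr}(S))=\tfrac12\mathrm{tr}(W\circ S+S\circ W)$---the last of which is exactly the left/right swap you anticipated; checking these in the duality basis, as you propose, is what the paper does.

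One slip: in your last sentence, $S\circ W$ picks out $S\circ W^+$ (the right factor of $S$ lies in $\Lambda^+_g$), not $S\circ W^-$. The second formula therefore requires the \emph{same} side-swap via $2R(\mathring h)$ as the first, converting $S\circ W^+$ into $W^-\circ S$; it is not more direct.
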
 
\begin{proof}[Proof sketch.]
    We outline the key steps. 
    \begin{enumerate}
        \item Using that $\#$ and $\ast$ commute, and noting that both $\mathrm{Id}_{\Lambda^2}$ and $W$ lies in the subbundle $(\Lambda^+_g \otimes \Lambda^+_g) \oplus \Lambda^-_g\otimes \Lambda^-_g$, one verifies that 
        \[
        \mathrm{Id}_{\Lambda^2}\# S = W\# S = 0.
        \]
        \item Next, from the Weitzenb\"ock formula in Proposition \ref{prop:weitzenbock-bi-forms}, the fact that $\Delta\mathrm{tr}(S) = \mathrm{tr}(\Delta S)$, and (1), one obtains 
        \begin{align*}
        \Delta \mathrm{tr}(S) = \mathrm{tr}\Big(\Delta_H^L S  - \big(W - \frac{\scal}{3} \mathrm{Id}_{\Lambda^2}\big) \circ S - \frac{1}{2}(\mathring{\mathrm{Ric}} \owedge g) \# S\Big), \\
        \Delta \mathrm{tr}(S) = \mathrm{tr}\Big(\Delta_H^R S -  S \circ \big(W - \frac{\scal}{3} \mathrm{Id}_{\Lambda^2}\big)  - \frac{1}{2}S \# (\mathring{\mathrm{Ric}} \owedge g)\Big).
        \end{align*}
        \item Next, one does some preliminary computations towards a formula for $R(\mathrm{tr}(S))$. To that end, recall that curvature operators act on 2-tensors by $T(h)_{ik} = T_{ijkl}h_{jl}$. A computation in a local frame gives the formulas 
        \begin{align*}
        \mathrm{Id}_{\Lambda^2}(\mathrm{tr}(S)) &= -\frac{1}{2}\mathrm{tr}(S), \\
        (\mathring{\mathrm{Ric}}\owedge g)(\mathrm{tr}(S)) &= \frac{1}{2} g\big(\mathring{\mathrm{Ric}}, \mathrm{tr}(S)\big)g + \frac{1}{2}  \mathrm{tr}\big((\mathring{\mathrm{Ric}}\owedge g) \# S\big), \\
        W(\mathrm{tr}(S)) &= \frac{1}{2} \mathrm{tr}(W \circ S + S \circ W).
        \end{align*}
        Note these identities are pointwise and linear, and thus most readily verified via a local frame and a good basis of traceless 2-tensors. For the last formula, one uses the duality properties of the Weyl tensor, along with the fact that it is traceless.  
        \item Putting the identities in the previous step together and using the Ricci decomposition formula \eqref{eq:ricci-decomp}, one obtains 
        \begin{align*}
        2R(\mathrm{tr}(S)) &= \frac{1}{2}g\big(\mathring{\mathrm{Ric}}, \mathrm{tr}(S)\big)g + \mathrm{tr}\Big(-\frac{\scal}{6} S + \frac{1}{2} (\mathring{\mathrm{Ric}} \owedge g) \# S +  W \circ S + S \circ W\Big).
        \end{align*}
        \item For the conformal component, one has $L(ug) = (\Delta u) g + 2 u \, \Ric$. 
        \item Putting steps (2), (4), and (5) together yields the asserted formulas, after some cancellation and recalling that $\#$ is  symmetric. Note additionally that as $S$ is a section of $\Lambda^-_g \otimes \Lambda^+_g$, we have $S \circ W = S \circ W^+$ and $S \circ \mathrm{Id}_{\Lambda^2} = S \circ \mathrm{Id}_{\Lambda^+_g}$ (and similarly for $S$ acting on the right). 
    \end{enumerate}
\end{proof}

\subsection{The weighted formula for 2-tensors} 

Finally, we introduce a weight $f$ and derive the analogue of Proposition \ref{prop:unweighted-2-tensors}. We use the Weitzenb\"ock formula to simplify the traceless part of the deformation, leaving the expression for the conformal component unchanged. In the K\"ahler setting, where the K\"ahler form $\omega$ satisfies $\omega \circ \omega \propto g$ and $\nabla \omega = 0$, one may derive a formula for conformal deformations much like the one for traceless deformations. In general, the tensors $T \in \{\mathrm{Id}_{\Lambda^2} , \mathrm{Id}_{\Lambda^\pm_g}\}$ in $\Lambda^2\otimes \Lambda^2$ are parallel and have traces proportional to the metric. However, applying the Weitzenb\"ock formula to $T$ does not appear to yield a useful expression in the weighted setting.

Recall that $\Ric_f = \Ric + \Hess_f$.

\begin{corollary}\label{cor:weighted-weitzenbock-on-traceless}
Suppose $S$ is a section of $\Lambda^-_g \otimes \Lambda^+_g$ and $h = u g+ \mathrm{tr}(S)$ on $(M^4, g)$. Then for any $a \in [0, 1]$, 
\begin{align*}
L_f h &= (\Delta_f u) g + 2u \,\Ric  + \mathrm{tr}\Big(\Delta^L_{H, f, a} S + S \circ \big(W^+ + \frac{\scal+ 3 \Delta f}{6} \mathrm{Id}_{\Lambda^+_g}\big) \Big)\\
\nonumber & \qquad +  \frac{1}{2}g(\mathring{\mathrm{Ric}}_f,\mathring h)g- a \big(\mathrm{Hess} f \owedge g\big)\circ S  -a(1-a) |\nabla f|^2\, \mathring h, \\
L_f h & =(\Delta_f u) g + 2u \,\Ric + \mathrm{tr}\Big(\Delta^R_{H, f, a} S + \big(W^- + \frac{\scal + 3 \Delta f}{6} \mathrm{Id}_{\Lambda^-_g}\big) \circ S \Big)\\
\nonumber & \qquad+ \frac{1}{2}g(\mathring{\mathrm{Ric}}_f,\mathring h)g- a \; S \circ \big(\mathrm{Hess} f \owedge g\big)   -a(1-a) |\nabla f|^2\, \mathring h.
\end{align*}
\end{corollary}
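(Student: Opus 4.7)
The plan is to derive the weighted formula from the unweighted Proposition \ref{prop:unweighted-2-tensors} by writing $L_f = L - \nabla_{\nabla f}$ and invoking Lemma \ref{lem:weighted-with-a}. Split the deformation as $h = ug + \mathrm{tr}(S)$ with $u \in C^{\infty}(M)$ and $S$ a section of $\Lambda^-_g \otimes \Lambda^+_g$. Since $\nabla g = 0$ one has $\nabla_{\nabla f}(ug) = (\nabla f(u))g$, so $L_f(ug) = (\Delta_f u) g + 2u \, \Ric$ immediately, as stated. This accounts for the conformal part, which is the same as in the unweighted formula.

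For the traceless part, since the rough Laplacian commutes with the trace, $\nabla_{\nabla f}(\mathrm{tr}(S)) = \mathrm{tr}(\nabla_{\nabla f} S)$, so subtracting $\nabla_{\nabla f} \mathrm{tr}(S)$ from the traceless component of Proposition \ref{prop:unweighted-2-tensors} yields an expression involving $\mathrm{tr}\bigl(\Delta^L_H S - \nabla_{\nabla f} S + S \circ (W^+ + \tfrac{\scal}{6}\mathrm{Id}_{\Lambda^+_g})\bigr)$ together with the Ricci contribution $\tfrac{1}{2}g(\mathring{\Ric}, \mathring h)g$. Apply Lemma \ref{lem:weighted-with-a} on bi-forms to substitute $\Delta^L_H S - \nabla_{\nabla f} S = \Delta^L_{H, f, a} S - (a - \tfrac{1}{2})(\Hess f \owedge g)\circ S + a(\Delta f - (1-a)|\nabla f|^2) S$, which introduces the weighted Hodge Laplacian at the cost of explicit $a$-dependent correction terms in $\Hess f$, $\Delta f$, and $|\nabla f|^2$.

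The final step is an algebraic reorganization to match the compact form in the Corollary. Using the identity $\mathring{\Ric}_f - \mathring{\Ric} = \Hess f - \tfrac{\Delta f}{4} g$ and that $\mathring h$ is traceless, one converts $\tfrac{1}{2}g(\mathring{\Ric}, \mathring h)g$ into $\tfrac{1}{2}g(\mathring{\Ric}_f, \mathring h)g$ at the cost of a Hessian correction term. The scalar coefficient $\tfrac{\scal}{6}\mathring h$ combines with a piece $\tfrac{\Delta f}{2}\mathring h$ extracted from the Hessian correction (via the decomposition $\Hess f \owedge g = \mathring{\Hess f} \owedge g + \tfrac{\Delta f}{4}(g\owedge g)$ and $\mathrm{Id}_{\Lambda^+_g}\circ S = S$) to produce the weighted self-dual curvature coefficient $\tfrac{\scal + 3\Delta f}{6}\mathring h$. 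The residual contributions collapse into the coefficient $-a$ of $(\Hess f \owedge g)\circ S$ and $-a(1-a)|\nabla f|^2 \mathring h$. The right-acting analogue follows from exactly the same argument, now using the right-handed versions of Proposition \ref{prop:weitzenbock-bi-forms} and Proposition \ref{prop:unweighted-2-tensors} together with the right-acting part of Lemma \ref{lem:weighted-with-a}.

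The main obstacle is the careful coefficient bookkeeping in the final reorganization: one must verify that the scalar $\mathring h$-terms, the $g(\cdot, \mathring h)g$-terms, and the $(\Hess f \owedge g)\circ S$-terms balance consistently for every $a \in [0, 1]$, exploiting the orthogonal decomposition of $\Hess f \owedge g$ and the special behavior of traces on $\Lambda^-_g\otimes \Lambda^+_g$. Once these are established, the Corollary follows directly by substitution.
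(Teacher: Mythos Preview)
Your proposal is correct and follows essentially the same route as the paper's own proof: both start from Proposition~\ref{prop:unweighted-2-tensors}, write $L_f = L - \nabla_{\nabla f}$, invoke Lemma~\ref{lem:weighted-with-a} to pass to $\Delta^L_{H,f,a}$, and then reorganize the $\Hess f$ contributions via the decomposition $\Hess f \owedge g = \mathring{\Hess f}\owedge g + \tfrac{\Delta f}{4}(g\owedge g)$, treating the conformal part separately. The one place where the paper is more explicit than your sketch is the trace identity $\mathrm{tr}\big((\Hess f\owedge g)\circ S\big) = (\Delta f)\,\mathring h + g(\mathring{\Hess}f,\mathring h)\,g$ (obtained from the Kulkarni--Nomizu formulas in dimension $4$), which is precisely the ``special behavior of traces on $\Lambda^-_g\otimes\Lambda^+_g$'' you allude to; once this is in hand, your bookkeeping goes through exactly as described.
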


\begin{proof}[Proof sketch]
We outline the key steps.
\begin{enumerate}
\item With $\mathring h =\mathrm{tr}(S)$, using that $\Delta_f = \Delta - \nabla_{\nabla f}$, the Weitzenb\"ock formula from Proposition \ref{prop:weitzenbock-bi-forms}, and Lemma \ref{lem:weighted-with-a}, one finds:
\begin{align*}
    L_f \mathring h &= L \mathring h - \nabla_{\nabla f} \mathring h \\
     & =   \mathrm{tr}\Big(\Delta^L_{H, f, a} S  +  S \circ \big (W +  \frac{\scal}{6}\mathrm{Id}_{\Lambda^2}\big)\Big)\\
     & \qquad + \frac{1}{2}g(\mathring{\mathrm{Ric}}, \mathring h)g  - (a-\frac{1}{2}) \mathrm{tr}\Big(\big(\mathrm{Hess} f \owedge g\big) \circ S \Big)+ a \Big(\Delta f -(1-a) |\nabla f|^2\Big) \mathring{h}.
\end{align*}
\item Now using that 
\[
\Hess f \owedge g = \frac{\Delta f}{4} g \owedge g + \mathring{\Hess} f \owedge g
\]
and that $S= \frac{1}{2} \pi_{-, +} (\mathring{h} \owedge g)$ 
we observe that
\begin{align*}
    \mathrm{tr}\big((\mathring{\Hess}f \owedge g) \circ S \big) =\mathrm{tr} \big(\pi_{+,-}(\mathring{\Hess}f \owedge g) \circ S \big)  = \frac{1}{2} \mathrm{tr}\big((\mathring{\Hess}f \owedge g)\circ (\mathring{h} \owedge g)\big)
\end{align*}
To further simplify this, we use \eqref{eq:kn-formula} to derive general formulas for traceless 2-tensors $\mathring A, \mathring B$:
\begin{align*}
   (\mathring{A} \owedge g) \circ (\mathring{B} \owedge g) &= 2\mathring{A} \owedge \mathring{B}+(\mathring{A}\mathring{B} + \mathring{B}\mathring{A})  \owedge g, \\
   \mathrm{tr}\big(2\mathring{A} \owedge \mathring{B} + (\mathring{A}\mathring{B} +\mathring{B} \mathring{A} ) \owedge g\big) &= (n-4)(\mathring{A}\mathring{B} +\mathring{B} \mathring{A}) + 2g(\mathring{A}, \mathring{B}) g.
\end{align*}
Hence, with $n = 4$
\begin{align*}
 \mathrm{tr}\big((\Hess f \owedge g) \circ S \big) = (\Delta f) \,\mathrm{tr}(S) +g(\mathring{\Hess} f , \mathring{h}) g 
\end{align*}
\item 
We conclude that
\begin{align*}
     L_f \mathring h &=  \mathrm{tr}\Big(\Delta^L_{H, f, a} S  +  S \circ \big (W +  \frac{\scal}{6}\mathrm{Id}_{\Lambda^2}\big)\Big)\\
     & \qquad +  \frac{1}{2}g(\mathring{\mathrm{Ric}}_f,\mathring h)g- a \big(\mathrm{Hess} f \owedge g\big)\circ S+   \Big(\frac{1}{2}\Delta f -a(1-a) |\nabla f|^2\Big)\mathring h.
\end{align*}
Note that $\frac{1}{2} (\Delta f) \mathring{h} = \mathrm{tr}\big(S \circ \big(\frac{3\Delta f}{6}  \mathrm{Id}_{\Lambda^2}\big) \big)$.
\item For the conformal component, one has $L_f(ug) = (\Delta_f u) g + 2 u \, \Ric$. 
\item Putting together items (3) and (4) yields the left-acting formula. Analogously, one obtains the right-acting formula. 
\end{enumerate}
\end{proof}

\begin{remark}\label{rem: overline R}
    On traceless deformations, the zeroth order term in these formulas is
    \[
    \overline{\mathbf{R}}{}^{\pm}_a := W^{\pm} + \frac{\scal}{6} \mathrm{Id}_{\Lambda^{\pm}_g } + \Big(\frac{\Delta f}{2}  - a(1-a)|\nabla f|^2\Big) \mathrm{Id}_{\Lambda^{\pm}_g} - a \, \Hess f \owedge g.
    \]
    When $f = 0$, this is the self-dual $\mathbf{R}^+$ and anti-selfdual curvature $\mathbf{R}^-$ respectively. When $f \neq 0$, but $a = 0$, we view $\overline{\mathbf{R}}{}^{\pm}=\overline{\mathbf{R}}{}^{\pm}_0$ as a weighted version of the duality curvatures. These weighted curvatures are relevant to the stability of orbifold singularities and appear in \cite{doorb}.
\end{remark}

\subsection{The weighted formula for solitons and K\"ahler solitons}
Suppose now that we take $a = 0$ and consider a gradient Ricci soliton $(M^4, g, f, \lambda)$ satisfying $\Ric_f = \frac{\lambda}{2} g$. Then $\frac{1}{6}(\scal + 3 \Delta f) =\lambda - \frac{1}{3} \scal$. We thus obtain the following corollary. 

\begin{corollary}
Suppose $(M^4, g, f, \lambda)$ is a gradient Ricci soliton. Suppose $S$ is a section of $\Lambda^-_g \otimes \Lambda^+_g$ and $h = u g + \mathrm{tr}(S)$. Then
\begin{align*}
     L_f  h &= (\Delta_f u) g + 2u \, \Ric +   \mathrm{tr}\Big(\Delta^L_{H, f, 0} S  +  S \circ \big (W^+ + (\lambda -  \frac{\scal}{3})\mathrm{Id}_{\Lambda^+_g}\big)\Big), \\
     L_f  h &= (\Delta_f u) g + 2u \, \Ric +   \mathrm{tr}\Big(\Delta^L_{H, f, 0} S  +  \big (W^- + (\lambda -  \frac{\scal}{3})\mathrm{Id}_{\Lambda^-_g}\big) \circ S\Big).
\end{align*}
\end{corollary}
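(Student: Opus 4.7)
The plan is to obtain this corollary as a direct specialization of Corollary~\ref{cor:weighted-weitzenbock-on-traceless} to the parameter value $a = 0$ combined with the two algebraic identities forced by the gradient Ricci soliton equation. No further Weitzenböck computation should be needed; the work is bookkeeping.

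First I would set $a = 0$ in both the left-acting and right-acting formulas from Corollary~\ref{cor:weighted-weitzenbock-on-traceless}. At $a = 0$, the terms $-a\,(\Hess f \owedge g)\circ S$, $-a\, S\circ(\Hess f \owedge g)$, and $-a(1-a)|\nabla f|^{2}\mathring h$ all drop out. This reduces each identity to
\begin{align*}
L_f h &= (\Delta_f u)g + 2u\,\Ric + \tfrac{1}{2} g(\mathring{\Ric}_f,\mathring h)g + \mathrm{tr}\Big(\Delta^L_{H,f,0}S + S\circ \big(W^+ + \tfrac{\scal + 3\Delta f}{6}\mathrm{Id}_{\Lambda^+_g}\big)\Big),
\end{align*}
and symmetrically on the right.

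Next I would invoke the soliton hypothesis $\Ric_f := \Ric + \Hess f = \tfrac{\lambda}{2} g$ in two ways. Tracing this identity gives $\scal + \Delta f = 2\lambda$, hence $\Delta f = 2\lambda - \scal$, so
\[
\frac{\scal + 3\Delta f}{6} = \frac{\scal + 3(2\lambda - \scal)}{6} = \lambda - \frac{\scal}{3},
\]
which converts the zeroth order factor into the form stated in the corollary. Second, $\Ric_f = \tfrac{\lambda}{2} g$ is pure trace, so $\mathring{\Ric}_f = 0$; consequently the term $\tfrac{1}{2}g(\mathring{\Ric}_f,\mathring h)g$ vanishes identically, eliminating the only cross-coupling between $\mathring h$ and the conformal component that survived the specialization $a = 0$. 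Applying the same substitutions to the right-acting version yields the second line (with $\Delta^R_{H,f,0}$ and $W^-$ acting on the left of $S$).

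There is no real obstacle here: the content was already packaged in Corollary~\ref{cor:weighted-weitzenbock-on-traceless}, and the corollary is essentially a clean restatement once $a = 0$ is chosen and the two soliton simplifications ($\mathring{\Ric}_f = 0$ and $\Delta f = 2\lambda - \scal$) are applied. The only minor care required is remembering that the $\tfrac{\scal}{6}\mathrm{Id}_{\Lambda^{\pm}_g}$ in the unweighted Besse-type formula and the $\tfrac{\Delta f}{2}\mathrm{Id}_{\Lambda^{\pm}_g}$ generated by the weight combine into the single expression $(\lambda - \tfrac{\scal}{3})\mathrm{Id}_{\Lambda^{\pm}_g}$, which is precisely the zeroth order operator that governs stability via Proposition~\ref{prop:main-weitzenbock}.
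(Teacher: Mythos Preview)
Your proposal is correct and follows exactly the same approach as the paper: specialize Corollary~\ref{cor:weighted-weitzenbock-on-traceless} to $a=0$, then use the soliton equation to kill $\mathring{\Ric}_f$ and to rewrite $\tfrac{1}{6}(\scal+3\Delta f)=\lambda-\tfrac{\scal}{3}$. Your observation that the second identity should carry $\Delta^R_{H,f,0}$ (rather than $\Delta^L_{H,f,0}$ as printed) is consistent with the right-acting formula in Corollary~\ref{cor:weighted-weitzenbock-on-traceless}.
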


\begin{remark}
    The condition $W^+ - \frac{\scal}{3} \mathrm{Id}_{\Lambda^+_g} > 0$ is known as the half-positive isotropic curvature (half-PIC) condition. That is, in the usual orientation. In the opposite orientation, $W^- - \frac{\scal}{3} \mathrm{Id}_{\Lambda^-_g} > 0$ is also called half-PIC. The combination of both condition is called positive isotropic curvature (PIC). This was introduced to Ricci flow by Hamilton \cite{ham97}, building on an earlier idea of understanding Ricci flow in dimension four through curvature evolution equations \cite{ham86}. 
    
    Conjecturally, the only (simply-connected) half-PIC gradient shrinking Ricci solitons are $\mathbb{CP}^2$, $\mathbb{S}^4$, and $\mathbb{S}^3 \times \mathbb{R}$. It is known that the only (simple-connected) PIC gradient shrinking solitons are $\mathbb{S}^4$ and $\mathbb{S}^3 \times \mathbb{R}$ \cite{lnw18}. 
\end{remark}

\begin{remark}\label{rem:kahler-soliton-conformal}
    When $(M^4, g, f, \lambda)$ is K\"aher with K\"ahler form $\omega$, we have $\omega \circ \omega \propto g$. In this case, the action of $L_f$ on conformal deformations simplifies as follows. Assume we have the normalization $\omega \circ \omega = \frac{1}{4} g$, which implies $|\omega| = 1$.  Using Proposition \ref{prop:weitzenbock-classic} and Lemma \ref{lem:weighted-with-a} with $a = 0$,  we have 
    \begin{align*}
        \frac{1}{4}L_f (u g) & = \Delta_f (u \omega)\circ \omega + \frac{1}{2}u\, \Ric  \\
        & = \big(\Delta_H(u \omega) - \nabla_{\nabla f} (u \omega) \big) \circ \omega  - u(W - \frac{\scal}{3} \mathrm{Id}_{\Lambda^2})(\omega) \circ \omega+ \frac{1}{2}u \, \Ric  \\
        & = \Delta_{H, f, 0} (u \omega) \circ \omega +\frac{1}{2} u( \Hess f\owedge g)(\omega) \circ \omega + \frac{1}{2}u \, \Ric.
    \end{align*}
    We have used that $(W - \frac{\scal}{3} \mathrm{Id}_{\Lambda^2})(\omega) = 0$. 
    Recalling that $\frac{1}{4} g \owedge g = \mathrm{Id}_{\Lambda^2}$, observe   
    \begin{align*}
    \frac{1}{2} u( \Hess f\owedge g)(\omega) \circ \omega + \frac{1}{2}u \, \Ric &= \frac{1}{2} u( \mathring{\Hess f}\owedge g)(\omega) \circ \omega + \frac{1}{2}u \, \mathring{\Ric} + \frac{1}{2} u (\scal + \Delta f)\, \omega \circ \omega. 
    \end{align*}
    The trace of the soliton equation gives $\scal + \Delta f = 2 \lambda$. On the other hand, in the K\"ahler setting $\mathring \Ric = \mathring{\rho} \circ \omega$, where $\mathring{\rho} = \rho - \scal \, \omega$ and $\rho$ is the Ricci form. Then, as $\frac{1}{2}\mathrm{tr}(\mathring{\Ric} \owedge g) = \mathring{\Ric}$, one has
    \[
    \frac{1}{2} \mathring{\Ric} \owedge g = \frac{1}{2}\big(\mathring{\rho} \otimes \omega + \omega \otimes \mathring{\rho}). 
    \]
    Thus,  
    \[
    \frac{1}{2} ( \mathring{\Hess f}\owedge g)(\omega) \circ \omega + \frac{1}{2} \, \mathring{\Ric}   = \frac{1}{2} ( \mathring{\Hess f}\owedge g)(\omega) \circ \omega  + \frac{1}{2} \mathring{\rho} \circ \omega =  \frac{1}{2} (\mathring{\Ric}_f \owedge g)(\omega) \circ \omega  = 0. 
    \]
    Hence
    \begin{equation}\label{eq:kahler-soliton-conformal}
    \frac{1}{4}L_f (u g) 
    = (\Delta_{H, f, 0} +\lambda)(u \omega) \circ \omega.
    \end{equation}
\end{remark}

\end{document}